\newcommand{\esp}{\operatorname{\mathbb{E}}}
\newcommand{\prob}{\mathbb{P}}
\newcommand{\norm}[1]{\|#1\|}
\newcommand{\R}{\mathbb{R}}
\newcommand{\tr}{\operatorname{tr}}
\newcommand{\var}{\operatorname{var}}
\def\row{{\rm row}}
\def\dksuggestion{\textcolor{red}}
\title[Spectrum of inhomogeneous matrices]{On spectral outliers
of inhomogeneous symmetric random matrices}
\author[D.\ J.\ Altschuler \and P.\ O.\ Santos \and K.\ Tikhomirov \and P.\ Youssef]{%
        Dylan J. Altschuler \and 
        Patrick Oliveira Santos \and 
        Konstantin Tikhomirov \and 
        Pierre Youssef
        }
\address{Dylan J. Altschuler, Department of Mathematical Sciences, Carnegie Mellon University.}
\email{daltschu@andrew.cmu.edu}
\address{Patrick Oliveira Santos. Laboratoire d'Analyse et de Math\'ematiques Appliqu\'ees UMR 8050, Universit\'e Gustave Eiffel, Universit\'e Paris Est Cr\'eteil, Marne-la-Vall\'ee, France.}
\email{patrick.oliveirasantos@u-pem.fr}
\address{Konstantin Tikhomirov, Department of Mathematical Sciences, Carnegie Mellon University.}
\email{ktikhomi@andrew.cmu.edu}
\address{Pierre Youssef. Division of Science, NYU Abu Dhabi, Abu Dhabi, UAE \& Courant Institute of Mathematical Sciences, New York University, New York, USA.}
\email{yp27@nyu.edu}
\date{}
\begin{document}
\newtheorem{theorem}{Theorem}[section]
\theoremstyle{plain}

\newtheorem{question}{Question}
\newtheorem{corollary}[theorem]{Corollary}
\newtheorem{lemma}[theorem]{Lemma}
\newtheorem{conjecture}[theorem]{Conjecture}
\newtheorem{proposition}[theorem]{Proposition}

\theoremstyle{definition}
\newtheorem{example}[theorem]{Example}
\newtheorem{definition}[theorem]{Definition}
\newtheorem{remark}[theorem]{Remark}

\newtheorem*{model}{Model}

\maketitle

\begin{abstract}
    Sharp conditions for the presence of spectral outliers are well understood for Wigner random
    matrices with iid entries.
    In the setting of \textit{inhomogeneous} symmetric random matrices (i.e.,\ matrices with a non-trivial variance profile), the corresponding problem has been considered only recently. Of special interest is the setting of sparse inhomogeneous matrices since sparsity is both a key feature and a technical obstacle in various aspects of random matrix theory. For such matrices, the largest of the variances of the entries has been used in the literature as a natural proxy for sparsity. We contribute sharp conditions in terms of this parameter for an inhomogeneous symmetric matrix with sub-Gaussian entries to have outliers. Our result implies a ``structural'' universality principle: the presence of outliers is only determined by the level of sparsity, rather than the detailed structure of the variance profile.

\end{abstract}

\section{Introduction}

Given an $n\times n$ random symmetric matrix $M_n$, 
define its {\it Empirical Spectral Distribution} (ESD) as the
random probability measure
\begin{align*}
    \mu_{M_n}:=\frac{1}{n}\sum_{k\in[n]} \delta_{\lambda_k(M_n)},
\end{align*}
where $\lambda_k(M_n)$ is the $k$-th largest eigenvalue of $M_n$. 
A central problem in random matrix theory is to
establish necessary and sufficient conditions for the
sequence $\mu_{M_n}$, $n\geq 1$,
to converge to a non-random measure (see, for example,
\cite[Section 2.4]{tao2012randommatrix} for a discussion
of different types of convergence: almost surely, in probability,
in expectation).

A celebrated result of Wigner \cite{wigner1958distribution}
asserts that the ESD of a normalized symmetric matrix
with i.i.d entries
converges weakly almost surely and in expectation to the {\it semi-circle distribution} $\mu_{sc}$ whose density $f_{sc}$ is given by  
\begin{align}\label{semicircular density}
    f_{sc}(x)=\frac{1}{2\pi}\sqrt{4-x^2}\, \mathbf{1}_{\{|x| \le 2\}}.
\end{align}
The main feature of this result is its universality: the limiting ESD does not depend on the
distribution of the matrix entries.

The universality phenomenon for the matrix spectrum
has been actively studied
for other models of randomness involving non-identically distributed entries.
Regarding convergence to the semi-circle law, we refer, in particular, to  
\cite{bogachev1992density,molchanov1992limiting,nica2002operator}
for random band matrices, \cite{erdHos2012bulk,erdos2013local}
for generalized Wigner matrices,
and to the survey \cite{benaych2016lectures} for further references.
{\it{}Sparse}
inhomogeneous random matrices have been a subject of much recent interest. 
The presence of sparsity often plays the role of both a key feature and a key challenge in statistical inference, graph theory, and random matrices \cite{bandeira2023matrix, bandeira2016sharp,brailovskaya2022universality}.

We now introduce the matrix model to be studied in this paper.

\begin{model}
Let $W_n$ denote an $n\times n$ Wigner matrix whose entries on and above the diagonal are iid copies of a centered random variable $\xi$ having unit variance. Further, 
let $\Sigma_n=(\sigma_{ij})_{1\leq i,j\leq n}$ be a symmetric $n\times n$ matrix
with non-negative entries satisfying 
$$
\sum_{j=1}^n\sigma_{ij}^2=1, \text{ for all }1\leq i\leq n\,
$$
(that is, the matrix $(\sigma_{ij}^2)_{1\leq i,j\leq n}$ is {\it{}doubly stochastic}). The  object of our study is the deformed matrix
$X_n=\Sigma_n\circ W_n$, where ``$\circ$'' denotes the Hadamard (entry-wise)
product. Note that the classical setting in Wigner's semi-circle theorem corresponds to $\Sigma_n=\frac{1}{\sqrt{n}} \mathbf{1}\mathbf{1}^t$, where $\mathbf{1}$ denotes the vector of all ones. 
\end{model}

A natural question regarding the above model is:
{\it Does the limiting ESD of $X_n$ depend on either $\Sigma_n$ or the distribution of $\xi$?}

Denote by $\sigma_n^*= \max_{1\leq i,j\leq n} \sigma_{ij}$ the maximum of the standard deviations of the entries of $X_n$. The parameter $\sigma_n^*$
can be viewed as a proxy of the matrix sparsity.
The work \cite{GNT2014} provides an essentially complete answer to the above question
(we also refer to \cite{bandeira2023matrix} for closely
related statements):

\begin{theorem}[\cite{GNT2014}]\label{th: main ESD}
    Let $W_n$ be an $n\times n$ symmetric matrix whose entries on and above the diagonal are iid copies of a centered random variable $\xi$ with unit variance. Let $\Sigma_n=(\sigma_{ij})_{1\leq i,j\leq n}$ be a symmetric matrix such that $(\sigma_{ij}^2)_{1\leq i,j\leq n}$ is doubly stochastic.
    Setting $X_n=\Sigma_n\circ W_n$ and assuming that $\sigma_n^*\to 0$, we have that $\mu_{X_n}$ converges weakly almost surely and in expectation to $\mu_{sc}$.
\end{theorem}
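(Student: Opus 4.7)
The approach I would take is the classical method of moments, adapted to the inhomogeneous setting. The goal is to show that for every fixed $k\geq 1$,
$$\frac{1}{n}\esp\tr X_n^k \;\longrightarrow\; m_k, \qquad \var\!\Big(\tfrac{1}{n}\tr X_n^k\Big) \;=\; o(1),$$
where $m_k$ is the $k$-th moment of $\mu_{sc}$ (vanishing for odd $k$ and equal to the Catalan number $C_{k/2}$ for even $k$). A truncation argument lets me first reduce to the case when $\xi$ has uniformly bounded moments of all orders, since convergence of moments is stable under $L^2$-small perturbations of the entries and $\tr X_n^k$ is $1$-Lipschitz in the entries in a suitable sense. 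Carleman's condition then upgrades moment convergence to weak convergence to $\mu_{sc}$, and Borel--Cantelli upgrades in-expectation convergence to almost sure convergence.

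Expanding the trace in the usual way,
$$\frac{1}{n}\esp\tr X_n^k \;=\; \frac{1}{n}\sum_{i_1,\dots,i_k\in[n]}\Big(\prod_{r=1}^{k}\sigma_{i_r i_{r+1}}\Big)\,\esp\!\prod_{r=1}^{k}\xi_{i_r i_{r+1}},$$
with indices taken cyclically. Since the $\xi_{ij}$ are independent, centered and have unit variance, only closed walks in which every distinct edge is traversed at least twice survive. I would then group walks by \emph{shape} (the isomorphism class of the pair (underlying multigraph, walk)). The dominant shapes are the \emph{double trees}: walks of length $k$ in which every edge is traversed exactly twice and the underlying graph is a tree with $k/2$ edges; these are in bijection with non-crossing pair partitions and hence number $C_{k/2}$.

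For a fixed double-tree shape with tree $T$, the associated sum is
$$\frac{1}{n}\sum_{\phi:V(T)\to[n]}\;\prod_{\{u,v\}\in E(T)}\sigma_{\phi(u)\phi(v)}^{2}.$$
Here the double stochasticity of $(\sigma_{ij}^2)$ is crucial: performing a leaf-by-leaf elimination on $T$, each partial sum $\sum_{j}\sigma_{\phi(u)j}^{2}=1$ collapses a leaf at no cost, and after pruning all $k/2$ leaves one is left with $\frac{1}{n}\sum_{\phi(\text{root})}1 = 1$. The contribution from tuples that are not injective is $O(1/n)$, so each double tree contributes $1+o(1)$, giving $C_{k/2}$ in total.

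The main obstacle, and the place where the hypothesis $\sigma_n^*\to 0$ enters, is showing that every other shape contributes $o(1)$. A non-double-tree shape either (a) has an edge traversed $\geq 3$ times, or (b) has a cycle in its underlying graph. In case (a), one of the $\sigma^2$ factors carries an extra $\sigma_{ij}$, which after the leaf-pruning procedure can be bounded by $\sigma_n^*$, yielding an overall factor $O(\sigma_n^*)$. In case (b), a cycle forces the vertex count to be strictly less than $e+1$ for $e$ distinct edges, so after applying double stochasticity on a spanning tree one is left with an extra sum $\sum_{i,j}\sigma_{ij}^{2}\cdot(\text{at least one extra }\sigma)$ that can again be controlled by $\sigma_n^*$ (or by $(\sigma_n^*)^{2}$ in the cleanest cases). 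I would package this by proving a weighted homomorphism lemma: for a connected multigraph $H$ on $v$ vertices with $e$ edges and multi-edge excess profile,
$$\sum_{\phi:V(H)\to[n]}\prod_{\{u,w\}\in E(H)}\sigma_{\phi(u)\phi(w)}^{2}\;\leq\;n\cdot(\sigma_n^*)^{\,2(e-v+1)+(\text{multi-edge excess})},$$
proved by induction on $v$ via leaf elimination. Summing over the (finitely many, for each fixed $k$) non-double-tree shapes then gives the desired $o(1)$ bound.

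Finally, concentration is handled by computing
$\esp\!\big[(\tr X_n^k)^2\big]-(\esp\tr X_n^k)^2$ as a sum over pairs of closed walks, in which the leading double-tree-times-double-tree contributions cancel. The remaining diagrams must have the two walks share an edge, which reduces the number of free vertices by one and yields $\var(\tfrac{1}{n}\tr X_n^k)=O(1/n)$. Combined with Borel--Cantelli this gives almost sure convergence of all moments, and Carleman's condition closes the argument.
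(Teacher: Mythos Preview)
Your argument for convergence in expectation is essentially the paper's: both expand the trace over closed walks, classify by shape, use leaf-pruning via double stochasticity to show each double tree contributes $1$, and control the remaining shapes by a weighted-homomorphism bound $\sum_{u}\sigma_u\le n(\sigma_n^*)^{k-2(m-1)}$ (the paper quotes this as \cite[Lemma~2.5]{bandeira2016sharp}/\cite[Theorem~2.8]{latala2018dimension}; your formulation is equivalent). One point to tighten: the paper truncates at the \emph{diverging} level $L_n=\min\{1/\sqrt{\sigma_n^*},\log^c n\}$, so that the moment bound $\esp w_u\le L_n^{k-2(m-1)}$ on the surviving walks is offset by the factor $(\sigma_n^*)^{k-2(m-1)}$ coming from the $\sigma$-sum. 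Reducing to a \emph{fixed} bound on $\xi$, as your sketch suggests, would require a separate argument that the truncation error in the ESD vanishes.

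There is, however, a genuine gap in the almost-sure step. A variance estimate $\var\big(\tfrac1n\tr X_n^k\big)=O(1/n)$ together with Chebyshev only yields tail probabilities of order $1/n$, which are \emph{not} summable, so Borel--Cantelli does not apply. In this inhomogeneous setting the pair-of-walks computation actually gives $O(\sigma_n^*/n)$ (or $O((\sigma_n^*)^2/n)$ under constant truncation), and for e.g.\ $\sigma_n^*=1/\log n$ this is still not summable; the usual subsequence trick is unavailable because the $X_n$ are not nested across $n$. The paper circumvents this by writing $\tr X_n^k-\esp\tr X_n^k$ as a martingale in the entry-revealing filtration, bounding the $p$-th moment of each increment $\Delta_t$ via the same combinatorics (obtaining $\esp\Delta_t^p\le C_{p,k}\,\sigma_t^p L_n^{p-2}$), and then applying Azuma--Hoeffding to get $\prob\big(\tfrac1n|\tr X_n^k-\esp\tr X_n^k|>n^{-1/2+\varepsilon}\big)\le 2n^{-p}$ for any fixed $p$, which \emph{is} summable. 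An alternative repair, which the paper mentions in the introduction, is to invoke a black-box concentration inequality for the ESD of matrices with independent entries \cite{bordenave2011spectrum,guntuboyina2009concentration}; either way, the second-moment bound alone is not enough.
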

The main result of \cite{GNT2014} can additionally handle $W_n$ having entries which satisfy a Lindeberg-type condition rather than being identically distributed. The authors of \cite{GNT2014} use the Stieltjes transform method to establish convergence in expectation. Additionally, it was noticed in \cite{chin2023necessary} that this can be ``upgraded'' to almost sure convergence using a concentration inequality for the spectral measure of random matrices with independent entries \cite[Lemma C.2]{bordenave2011spectrum}, \cite{guntuboyina2009concentration}. 
In this note, we provide an alternative proof of Theorem~\ref{th: main ESD} based on the moment method (see Appendix to this paper).

Take $\Sigma_n$ to be the adjacency matrix of a $d$-regular graph 
(rescaled by $1/\sqrt{d}$ to have a doubly stochastic variance profile). Then $X_n$ can be viewed as a weighted adjacency matrix
with iid centered edge weights of variance $1/d$.
Specialized to this setting, the above theorem asserts that the condition $d \to \infty$ is sufficient for almost sure convergence of the limiting ESD to the semi-circle law, regardless of the structure of the underlying graph.
On the other hand, if $d\geq 2$ is fixed, then the distribution of $\mu_{X_n}$ is dependent both on the structure of $\Sigma_n$ as well as the atomic distribution of $W_n$.
For completeness, we provide a proof of this claim in the Appendix (see Proposition~\ref{prop: ESD} there).

\subsection{Main results: spectral outliers of inhomogeneous matrices}

Convergence of the ESD of $X_n$ to the semi-circle law, which is supported on $[-2,2]$,  guarantees that $n - o(n)$ eigenvalues of $X_n$ lie in the interval
$[-2,2]$. That does not rule
out existence of \textit{spectral outliers}, that is, eigenvalues
near the spectral edges which are at a non-vanishing distance
to the support of the limiting ESD. In the classical setting of Wigner matrices with iid entries, it is known
that the assumption of bounded fourth moment 
of the entries is necessary
and sufficient to guarantee the absence of outliers \cite{bai1988necessary}. 

Spectral outliers of Hermitian random matrices have been extensively studied \cite{KS2003, baik2005phase, BS2006, BGBK2019, tikhomirov2021outliers, ADK2021, brailovskaya2022universality},
in part due to important applications in statistical inference and signal processing
(we also refer, among others, to papers
\cite{TW1996, Soshnikov1999, johnstone2001largest, Sodin2009, sodin2010spectral, EYY2012, LS2018} dealing
with the limiting distribution of the extreme eigenvalues of Wigner
and sample covariance matrices). In particular, the seminal result of Baik--Ben Arous--Peche \cite{baik2005phase}, and its generalizations, unraveled a phase transition phenomenon in the appearance of outliers in the spectrum of classical random matrix ensembles under small rank perturbations. 
In \cite{tikhomirov2021outliers}, a similar phase transition phenomenon was observed in the case of Wigner matrices under random sparsification, i.e., in the setting where each entry in the Wigner matrix is kept independently with some probability $p_n$ (see also \cite{ADK2021}). In the setting of the present paper, that corresponds to studying the norm of $B_n\circ W_n$, where $W_n$ is a Wigner matrix and $B_n$ is a symmetric matrix whose entries on and above the main diagonal are iid copies of a Bernoulli random variable with parameter $p_n$. \\

In this note, we give necessary and sufficient conditions for the presence of outliers for the matrix $X_n=\Sigma_n\circ W_n$ in terms of $\Sigma_n$ and $\xi$. The main problem we want to address is
whether structural
universality phenomenon is observed for the spectral outliers, i.e, the presence/absence of outliers is characterized
only by the level of sparsity of $\Sigma_n$
and does not depend on the fine structure of the variance profile.

Unlike for the bulk, it is readily checked that the presence or absence of outliers depends on the distribution of $\xi$ regardless of the sparsity pattern. Indeed, this follows directly from the results of \cite{latala2018dimension}, where the norms of inhomogeneous matrices with Gaussian entries and with heavy-tailed entries were completely characterized. While distributional universality provably fails to hold, it remains a compelling question whether structural universality holds under some restrictions on $\xi$. We isolate the effect of the structure of $\Sigma_n$ by restricting $\xi$ to the class of sub-Gaussian distributions. Recall that a random variable $\xi$ is said to be sub-Gaussian if there exists a constant $C$ such that $\mathbb{E} e^{\lambda (\xi-\esp \xi)}\leq e^{C\lambda^2}$ for every $\lambda$. Our question is:

\begin{question}\label{q: outliers}
Let $W_n$ be an $n\times n$ Wigner matrix whose entries are
sub-Gaussian and consider $X_n=\Sigma_n\circ W_n$. Does the presence/absence of outliers for $X_n$ depend on either the specific structure of $\Sigma_n$ or the particular (sub-Gaussian) distribution of $\xi$? What conditions on $\Sigma_n$ guarantee the presence/absence of outliers?
\end{question}

We provide a sharp characterization for the appearance of outliers in terms of the sparsity proxy $\sigma_n^*$. 
Recall by Theorem~\ref{th: main ESD} that whenever $\sigma_n^*\to 0$, the limiting ESD of $X_n$ is the semi-circle distribution. Thus, if $\sigma_n^* \to 0$, the above question asks whether $\Vert X_n\Vert \to 2$, where $\Vert \cdot\Vert$ denotes the spectral norm.  

\begin{theorem}[Main result]\label{th: outliers} 
    Let $W_n$ be an $n\times n$ symmetric matrix whose entries on and above the main diagonal are iid copies of a centered sub-Gaussian random variable $\xi$ with unit variance. For any symmetric matrix $\Sigma_n=(\sigma_{ij})_{1\leq i,j\leq n}$ such that $(\sigma_{ij}^2)_{1\leq i,j\leq n}$ is doubly stochastic, if $\sigma_n^*\sqrt{\log n}\to 0$, then $\Vert \Sigma_n\circ W_n\Vert \to 2$ almost surely, implying the absence of outliers. 
\end{theorem}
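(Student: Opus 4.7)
The plan is to combine Theorem~\ref{th: main ESD} with a high-moment estimate. Weak convergence of $\mu_{X_n}$ to the semi-circle law (whose support is $[-2,2]$, with positive mass near each endpoint) automatically forces $\liminf_n \|X_n\|\ge 2$ a.s., so the content of the theorem is the matching upper bound $\limsup_n \|X_n\|\le 2$ a.s. I would obtain this by adapting Soshnikov's moment method for Wigner matrices to the inhomogeneous setting, using the doubly-stochastic variance profile and the sub-Gaussian moment bound as the two key inputs.

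Fix $\varepsilon>0$ and an even exponent $2k=2k_n$ to be chosen later. Markov's inequality applied to $\|X_n\|^{2k}\le \tr(X_n^{2k})$ gives
$$\mathbb{P}(\|X_n\|>2+\varepsilon)\le(2+\varepsilon)^{-2k}\,\mathbb{E}\tr(X_n^{2k}),$$
and I would expand the trace as a sum over closed walks $\gamma=(v_0,\ldots,v_{2k}=v_0)$ in $[n]$. Since $W_n$ has independent, centered entries, only walks in which every distinct edge is traversed at least twice contribute; I group these by their \emph{shape}, i.e., the underlying multigraph with $v$ vertices, $e$ edges, multiplicities $(m_f)$ satisfying $\sum_f m_f = 2k$, and traversal order. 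Three ingredients then bound the contribution of each shape: (a)~sub-Gaussianity gives $|\mathbb{E}\xi^m|\le(Cm)^{m/2}$ for an absolute constant $C$; (b)~fixing a spanning tree $T$ of the shape and bounding $\sigma_f\le \sigma_n^*$ on every ``excess'' traversal (a non-tree edge, or any multiplicity beyond $2$ on a tree edge) yields
$$\prod_f\sigma_f^{m_f}\le\Bigl(\prod_{f\in T}\sigma_f^2\Bigr)\cdot(\sigma_n^*)^{2k-2(v-1)};$$
(c)~the doubly-stochastic identity $\sum_j\sigma_{ij}^2=1$ telescopes $\sum_{\phi}\prod_{f\in T}\sigma_{\phi(f)}^2 = n$ over embeddings $\phi$ of $T$ into $[n]$.

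Combined with a F\"uredi--Koml\'os-type count of the number of shapes with a given profile, the pure tree walks ($v=k+1$, $e=k$, all $m_f=2$) contribute $\le n C_k \le n\cdot 4^k$, while each unit of excess (a reduction of $v$ by $1$, or an increase of some $m_f$ beyond $2$) costs a factor of $O((\sigma_n^*\sqrt{k})^2)$ once the sub-Gaussian moments and shape counts are absorbed. Summing the resulting geometric series yields $\mathbb{E}\tr(X_n^{2k})\le 2n\cdot 4^k$ provided $\sigma_n^*\sqrt{k}$ is sufficiently small. Plugging back, $\mathbb{P}(\|X_n\|>2+\varepsilon)\le 2n\cdot(4/(2+\varepsilon)^2)^{k_n}$, which is summable in $n$ if $k_n = C_\varepsilon\log n$ with $C_\varepsilon$ large. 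The compatibility condition $\sigma_n^*\sqrt{k_n}\to 0$ then reduces precisely to the hypothesis $\sigma_n^*\sqrt{\log n}\to 0$. Borel--Cantelli, together with $\varepsilon\downarrow 0$ along a countable sequence, concludes that $\limsup_n \|X_n\|\le 2$ almost surely.

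The main obstacle will be a quantitative verification of the ``$O((\sigma_n^*\sqrt{k})^2)$ per unit of excess'' claim: one must carefully orchestrate the $(Cm)^{m/2}$ sub-Gaussian moments against the F\"uredi--Koml\'os count of shapes of a given excess, on top of the doubly-stochastic reduction of (c). The sharpness of the threshold $\sigma_n^*\sqrt{\log n}\to 0$ is exactly the point at which the two constraints $k\asymp \log n$ (required for summability) and $\sigma_n^*\sqrt{k}\to 0$ (required for tightness of the moment bound) become simultaneously satisfiable.
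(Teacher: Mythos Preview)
Your broad architecture coincides with the paper's: the lower bound $\liminf_n\|X_n\|\ge 2$ comes from Theorem~\ref{th: main ESD}, the upper bound from a high-moment estimate, Markov, and Borel--Cantelli with $k_n$ of order $\log n$. The divergence is entirely in how $\mathbb{E}\tr(X_n^{2k})$ is controlled.

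The paper does \emph{not} attempt a direct F\"uredi--Koml\'os analysis of the sub-Gaussian moments. Instead it truncates, writing $X_n=X_n^{\le L}+X_n^{>L}$ at a level $L$ to be optimized. The bounded piece is handled by the sharp bounded-entry result \cite[Theorem~4.8]{latala2018dimension}, yielding $2+O(L\sigma_n^*\sqrt{p})$; the tail is symmetrized (so only even edge-multiplicities survive) and then compressed to a Gaussian matrix of dimension roughly $(\sigma_n^*)^{-2}e^{-cL^2}$, yielding $O(e^{-cL^2/2}+\sigma_n^*\sqrt{p})$. Optimizing $L^2\sim\log(1/(\sigma_n^*\sqrt{p}))$ gives the additive error $\sigma_n^*\sqrt{p\log(1/(\sigma_n^*\sqrt{p}))}$, which still vanishes under $\sigma_n^*\sqrt{\log n}\to 0$.

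The genuine gap in your proposal is the claim that ``each unit of excess costs a factor of $O((\sigma_n^*\sqrt{k})^2)$.'' For \emph{bounded} entries $|\xi|\le L$ one has $|T(s)|\le L^{\Delta(s)}$, and then the compression machinery of \cite{bandeira2016sharp,latala2018dimension} produces the additive error $L\sigma_n^*\sqrt{k}$; the $\sqrt{k}$ is a genuine output of that argument, not something one can read off from a per-shape heuristic. For sub-Gaussian entries the moment bound $|\mathbb{E}\xi^m|\le(Cm)^{m/2}$ amounts to an effective $L$ of order $\sqrt{m}$, and since $m$ ranges up to $2k$ a blunt substitution yields error $\sigma_n^*k$ rather than $\sigma_n^*\sqrt{k}$, forcing the suboptimal hypothesis $\sigma_n^*\log n\to 0$. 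This is exactly the deficiency the paper ascribes to the existing ``second approach'' (\cite[Corollary~3.6]{bandeira2016sharp}, \cite[Remark~4.13]{latala2018dimension}). To rescue your route one would have to show that shapes carrying high-multiplicity edges are combinatorially rare enough to absorb their inflated moments, uniformly over all shapes---including those with \emph{odd} edge-multiplicities, where direct Gaussian comparison is unavailable because $T_g(s)=0$. You correctly flag this as ``the main obstacle'' but offer no mechanism; the paper's truncation-plus-symmetrization is precisely the device that sidesteps it, by confining the sub-Gaussian moment growth to a piece whose contribution is already exponentially damped by $e^{-cL^2}$.
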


Specialized to the setting of a weighted $d$-regular graph, the above theorem asserts that in the case of sub-Gaussian weights, if $d$ grows faster than $\log n$, then there are no outliers regardless of the structure of the base graph. This result is sharp as the next theorem shows.

\begin{theorem}[A sufficient condition for existence of outliers]\label{th: presence outliers}  
Let $\xi$ be a centered random variable of unit variance and with bounded
fourth moment,
and for each $n$, let $W_n$ be an $n\times n$ symmetric matrix whose entries above the main diagonal are iid copies of $\xi$.
Fix any sequence $d:= d(n)=O(\log n)$ with $d \to \infty$ such that $nd$ is even for every $n$. Then
there exists a sequence of non-random $d$-regular graphs with adjacency matrices $A_n$ such that
for all large $n$, the matrices $\frac{1}{\sqrt{d}}A_{n}\circ W_n$ have outliers almost surely.
That is,
$$\prob\bigg(\liminf_{n\to\infty}\Big\Vert \frac{1}{\sqrt{d}}A_{G_n}\circ W_n\Big\Vert>2\bigg)=1.$$
\end{theorem}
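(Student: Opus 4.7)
I would take $A_n$ to be the adjacency matrix of a disjoint union of $N := \lfloor n/(d+1)\rfloor$ cliques $K_{d+1}$, merging any leftover vertices into one of these cliques to preserve $d$-regularity (feasible for large $n$ by the parity assumption on $nd$ and the fact that $d = O(\log n)$). Then $X_n := d^{-1/2}\, A_n \circ W_n$ is block-diagonal with $N - O(1)$ independent blocks of the form $d^{-1/2} W^{(i)}$, where each $W^{(i)}$ is a $(d+1)\times(d+1)$ Wigner matrix with entries distributed as $\xi$, plus at most one residual block of size $O(d)$. The residual block has operator norm $2 + o(1)$ a.s.\ by Theorem~\ref{th: main ESD}. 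Consequently,
\[
\|X_n\| \;\geq\; \max_{i}\|W^{(i)}\|/\sqrt{d},
\]
and the theorem reduces to proving this maximum exceeds $2 + \delta$ almost surely for some $\delta = \delta(c) > 0$ whenever $d \leq c\log n$.

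\textbf{Key estimate.} Individually, $\|W^{(i)}\|/\sqrt{d+1} \to 2$ a.s.\ by Bai-Yin (sharp under bounded 4th moment), so the strict excess over $2$ must come from extremal behavior across the $N = \Theta(n/\log n)$ i.i.d.\ blocks. The plan is to invoke a Tracy-Widom-type lower tail estimate: for some $c_1,c_2>0$ and all $s\geq 1$ and $d$ large,
\[
\prob\Bigl(\lambda_{\max}(W^{(i)}) \geq 2\sqrt{d+1} + s(d+1)^{-1/6}\Bigr) \;\geq\; c_1 \exp(-c_2 s^{3/2}).
\]
Taking $s \asymp (\log N)^{2/3}$, the product $N \cdot c_1\exp(-c_2 s^{3/2})$ tends to infinity, so by independence of the blocks at least one satisfies the inequality with probability tending to $1$. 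Dividing by $\sqrt{d}$, the resulting excess above $2$ is of order $(\log N)^{2/3}/d^{2/3}$, which is at least $c^{-2/3}$ for large $n$. A Borel-Cantelli argument along $n$ then upgrades this to almost sure existence of outliers for all large $n$.

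\textbf{Main obstacle.} The central difficulty is justifying the Tracy-Widom-type tail bound under only a bounded 4th moment, since classical edge universality results require either symmetry, matching of higher moments, or sub-exponential tails. I would dispatch the general case by a dichotomy. If $\xi$ is sub-Gaussian, then TW universality (Soshnikov, Tao-Vu, Lee-Yin) applies directly to each Wigner block and yields the tail bound. If $\xi$ is not sub-Gaussian, then by definition $(\esp|\xi|^p)^{1/p}$ is not $O(\sqrt{p})$, and this forces the maximum among the $\Theta(nd)$ independent weights $|W_e|$ to exceed any fixed multiple of $\sqrt{d}$ almost surely. Then $\|X_n\| \geq d^{-1/2} \max_e|W_e| \to \infty$ and the outlier conclusion follows immediately without invoking the Key Estimate. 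Combined, the two regimes cover every $\xi$ satisfying the hypotheses of the theorem.
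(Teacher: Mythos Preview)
Your construction (disjoint union of cliques) and the reduction to showing that one of $N\sim n/d$ i.i.d.\ Wigner blocks has norm at least $(2+\delta)\sqrt{d}$ are the same as in the paper. The difficulty, exactly as you identify, is the probability estimate $\prob(\|W_{d+1}\|\geq(2+\delta)\sqrt{d})\geq e^{-\varepsilon d}$ for arbitrary $\varepsilon>0$ (this is the paper's Lemma~\ref{lem: lower bound norm}). Your proposed route to this estimate has two genuine gaps.

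\textbf{Gap in the sub-Gaussian branch.} Tracy--Widom universality (Soshnikov, Tao--Vu, Lee--Yin) gives convergence in distribution of $d^{2/3}(\lambda_{\max}/\sqrt{d}-2)$, i.e., information at \emph{fixed} $s$. Your argument requires the lower tail bound at $s\asymp(\log N)^{2/3}\asymp d^{2/3}\to\infty$, which in unnormalized terms is $\prob(\lambda_{\max}\geq(2+\delta)\sqrt{d})$ for fixed $\delta>0$. This is a large deviations lower bound, not a consequence of weak convergence. Such LDPs are known only for restricted classes of entries (GOE, Rademacher, sharp sub-Gaussian), and the paper's own Remark~\ref{rk: seginer} points out that existing LDP results ``only cover particular cases of sub-Gaussian random variables.'' So this branch is not justified at the stated level of generality.

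\textbf{Gap in the non-sub-Gaussian branch.} The implication ``$\xi$ not sub-Gaussian $\Rightarrow$ $\max_e|W_e|/\sqrt{d}\to\infty$'' is false. Take a symmetric $\xi$ with atoms at $\pm a_k$ of mass $p_k=a_k^{-5}$, where $a_{k+1}=\exp(a_k^3)$. Then $\esp\xi^4=\sum a_k^{-1}<\infty$, and $\xi$ is not sub-Gaussian since $\prob(|\xi|>a_k/2)\approx a_k^{-5}\gg e^{-a_k^2/K^2}$ for every $K$. But along the subsequence $n_l=\exp(a_{k}^{5/2})$, one has $d(n_l)\asymp a_k^{5/2}$ while $n_ld(n_l)\cdot p_{k+1}\to 0$, so with high probability the maximum over $\Theta(n_ld)$ copies is at most $a_k$, giving $\max_e|W_e|/\sqrt{d}\lesssim a_k^{-1/4}\to 0$. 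Thus for this $\xi$ your max-entry argument fails on a subsequence, and your dichotomy does not cover it.

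The paper avoids both issues with an elementary, distribution-free proof of the key estimate. It exposes the last row of $W_{k+1}$: the top $k\times k$ block $M'$ has, by the semicircle law, an eigenvector $v$ with $\|M'v\|\geq(2-\alpha)\sqrt{k}$, and by eigenvector incompressibility (Rudelson--Vershynin, using only bounded fourth moment) many coordinates of $v$ are of size $\gtrsim 1/\sqrt{k}$. Conditioning on $M'$, the inner product of the independent last row with $v$ then exceeds $\gamma\sqrt{k}$ with probability $\geq e^{-\varepsilon k/2}$ by a direct anti-concentration argument using only that $\xi$ is non-degenerate. Since $\|W_{k+1}\|^2\geq\|M'v\|^2+\langle\mathrm{row}_{k+1},v\rangle^2$, choosing $\alpha$ small relative to $\gamma$ yields the lemma. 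This argument needs no edge universality or LDP input, which is precisely why it handles arbitrary $\xi$ with bounded fourth moment.
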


The sharp characterization provided by Theorem \ref{th: outliers} combined with Theorem \ref{th: presence outliers} was previously established in the special case of $d$-regular graphs with Gaussian edge weights \cite{bandeira2016sharp}. Moreover, a succession of works \cite{bandeira2016sharp, benaych2014largest, khorunzhiy2008estimates, latala2018dimension,  sodin2010spectral} showed that for bounded weights, there are no outliers whenever $d/\log n\to \infty$. In the other direction, the existence of outliers for Rademacher weights was previously  known when $d=O(\sqrt{\log n})$ \cite{seginer2000expected}, rather than up to the sharp rate of  $d=O(\log n)$ that our results capture. It should be noted that the argument of \cite{seginer2000expected} can be extended and adapted to the setting of Theorem~\ref{th: presence outliers} showing the existence of outliers in the regime $d/\log n \to 0$. However, that argument fails in the regime where $d$ is of order $\log n$, while the proof we provide for Theorem~\ref{th: presence outliers} gives a unified treatment which works in all regimes. We refer to Remark~\ref{rk: seginer} for more details.  
\\

Returning to the general setting of Theorem \ref{th: outliers}, to the best of our knowledge, the \textit{absence} of outliers for the general sub-Gaussian distributions has only been treated in \cite{benaych2014largest} for matrices with restricted bandwidth and graphs with suboptimal sparsity $d/\log^{9/2}n \to \infty$. However, there has been much recent progress in characterizing the spectral norm of inhomogeneous random matrices \cite{bandeira2023matrix,bandeira2016sharp, benaych2014largest, brailovskaya2022universality, latala2018dimension}. To the best of our knowledge, there are two existing upper bounds for the norm of inhomogeneous sub-Gaussian random matrices. The first is derived through a symmetrization procedure and comparison with the Gaussian setting \cite{bandeira2016sharp}, which leads to a suboptimal constant multiplicative factor and thus cannot capture sharp conditions for the presence/absence of outliers. The second result captures the correct constant $2$ but at the expense of increasing the additive error term, which also results in a suboptimal regime for the absence of outliers (see \cite[Corollary~3.6]{bandeira2016sharp}, \cite[Remark~4.13]{latala2018dimension}). Theorem~\ref{th: outliers} addresses these shortcomings and captures the correct asymptotic behavior for general sub-Gaussian matrices.

The proof of Theorem~\ref{th: outliers} avoids the use of symmetrization and instead directly estimates the trace of powers of $ \Sigma_n\circ W_n$. It uses a truncation argument to split this matrix into the sum of a matrix with bounded weights and a remainder. The former is controlled using the results in \cite{bandeira2016sharp,benaych2014largest,latala2018dimension}. 

The remainder is controlled by a new version of a compression argument, originally devised in \cite[Proposition 2.1]{bandeira2016sharp} and also used in \cite{latala2018dimension}, for comparing sparse matrices to dense matrices of smaller dimension. Implementing this argument in our setting requires tuning the dimension of the dense matrix as a function of the truncation level. 

Matching upper and lower bounds on the norms of inhomogeneous Gaussian matrices were previously given in \cite{latala2018dimension}. However, these results are only sharp up to a multiplicative constant, thus failing to capture the scale necessary for characterizing outliers. 
Upper bounds with the correct constant scale for the norm of inhomogeneous Gaussian and bounded random matrices were derived in \cite{bandeira2016sharp,latala2018dimension}. In the special case of Gaussian or bounded matrices, these results imply half of the dichotomy we prove, namely the absence of outliers. However, matching lower bounds would be needed to establish the presence of outliers. 
Theorem~\ref{th: presence outliers} addresses this shortcoming by showing that for \textit{any} sparsity $d = \cO{\log n}$,  there is a sequence of graph adjacency matrices $\Sigma_n$ such that $X_n:= \Sigma_n \circ W_n$ has outliers for $W_n$ having \textit{any} non-atomic distribution with finite fourth moment.  Thus, there is a sharp transition at sparsity $\cO{\log n}$ below which structural universality is not observed.
Below this level of sparsity, the presence/absence of outliers necessarily depends on the structure of $\Sigma_n$.
The sequence of deterministic graphs in Theorem~\ref{th: presence outliers} is a union of cliques, and the proof of the result combines an anti-concentration argument with incompressibility properties of eigenvectors of Wigner matrices \cite{rudelson2016no}.  \\

The paper is organized as follows: in Sections~\ref{sec: no outliers} and Section~\ref{sec: outliers}, we prove Theorems ~\ref{th: outliers} and \ref{th: presence outliers} respectively. Appendix~\ref{sec: universal ESD} contains
a proof of Theorem~\ref{th: main ESD} based on the moment method, together with a proof of Propositions~\ref{prop: ESD}. 
Although Theorem~\ref{th: main ESD} is known, we prefer to include our argument
here since it provides a uniform combinatorial treatment of both the bulk and
the edges of the spectrum in an inhomogeneous setting.

\subsection*{Acknowledgments} The authors are grateful to Alexey Naumov for bringing to their attention the references \cite{GNT2014} and \cite{chin2023necessary}. We also thank Ramon van Handel for pointing out Remarks \ref{remark: ramon's remark} and \ref{rk: seginer}.

\section{The regime with no outliers}\label{sec: no outliers}
The goal of this section is to prove Theorem~\ref{th: outliers}. Let $\xi$ be a centered sub-Gaussian random variable with unit variance and $W_n=(w_{ij})_{1\leq i,j\leq n}$ be an $n\times n$ Wigner matrix whose entries on and above the diagonal are iid copies of $\xi$. 
Fix an $n\times n$ symmetric matrix $\Sigma_n=(\sigma_{ij})_{1\leq i,j\leq n}$ satisfying 
\begin{itemize}
    \item $(\sigma_{ij}^2)_{1\leq i,j\leq n}$ is doubly stochastic;
    \item $\sigma_n^*\sqrt{\log n}\to 0$, where $\sigma_n^*=\max_{1\leq i,j\leq n} \sigma_{ij}$.
\end{itemize}

Given $L=L_n$ to be specified later, define $W_n^{\leq L}$ (resp. $W_n^{> L}$) to be the matrix with entries $\big(w_{ij}\mathbf{1}_{|w_{ij}| \le L}\big)_{1\leq i,j\leq n}$ (resp. $\big(w_{ij}\mathbf{1}_{|w_{ij}| > L}\big)_{1\leq i,j\leq n}$). 
Correspondingly, define $X_n^{\leq L}:= \Sigma_n\circ W_n^{\leq L}$ (resp. $X_n^{>L}:= \Sigma_n\circ W_n^{>L}$) so that 
$$
X_n:= \Sigma_n\circ W_n= X_n^{\leq L}+X_n^{>L}.
$$
Given an integer $p$, the triangle inequality combined with the fact that the entries of $X_n$ are centered yields
\begin{align*}
    \left(\esp \tr(X_n^{2p})\right)^{1/(2p)} &\le \left(\esp \tr(X_n^{\le L}-\esp X_n^{\le L})^{2p}\right)^{1/(2p)}+\left(\esp \tr(X_n^{>L}-\esp X_n^{> L})^{2p}\right)^{1/(2p)}\\
    &=:\Gamma_1+\Gamma_2.
\end{align*}
In order to bound $\Gamma_1$, we make use of \cite[Theorem 4.8]{latala2018dimension} as the entries of $X_n^{\le L}$ are bounded, to obtain that 
$$
\Gamma_1\leq 2 \left(\sum_{j \in [n]}\left(\sum_{i \in [n]}\sigma_{ij}^2 \esp \big(w_{ij}\mathbf{1}_{|w_{ij}| \le L}-\esp w_{ij}\mathbf{1}_{|w_{ij}| \le L}\big)^2\right)^{p}\right)^{1/(2p)}+C n^{\frac{1}{p}}L\sigma_n^*\sqrt{p}, 
$$
for some universal constant $C$. Using that $(\sigma_{ij}^2)_{1\leq i,j\leq n}$ is doubly stochastic, $\xi$ has unit variance, and that the $w_{ij}$'s are iid, we deduce that 
\begin{equation}\label{eq: gamma1}
\Gamma_1\leq 2n^{\frac{1}{2p}} \sqrt{\var(\xi \mathbf{1}_{|\xi| \le L}) }+ C n^{\frac{1}{p}} L\sigma_n^*\sqrt{p}\leq n^{\frac{1}{p}} \Big( 2+ CL\sigma_n^*\sqrt{p}\Big). 
\end{equation}
To bound $\Gamma_2$, let $\tilde{X}_n^{>L}$ be an independent copy of $X_n^{>L}$. Since $X \mapsto \tr(X)^{2p}$ is convex, Jensen's Inequality implies that
\begin{align*}
    \Gamma_2=\left(\esp_{X_n} \tr\left(\esp_{\tilde{X}_n}\left(X_n^{>L}-\tilde{X}_n^{> L}\right)\right)^{2p}\right)^{1/(2p)} \le \left(\esp_{X_n,\tilde{X}_n} \tr(X_n^{>L}-\tilde{X}_n^{> L})^{2p}\right)^{1/(2p)}.
\end{align*}
By symmetry, the distribution of $X_n^{>L}-\tilde{X}_n^{> L}$ coincides with that of $R\circ (X_n^{>L}-\tilde{X}_n^{> L})$, where $R=(r_{ij})_{1\leq i,j\leq n}$ is an $n \times n$ symmetric matrix with independent Rademacher variables independent of $X_n^{>L}$ and $\tilde{X}_n^{>L}$. By the triangle inequality, it follows that
\begin{align*}
    \Gamma_2 \le 2(\esp \tr( R\circ X_n^{>L})^{2p})^{1/(2p)}.
\end{align*}
Expanding the trace, we get
\begin{align*}
    \esp \tr(R \circ X_n^{>L})^{2p}=\sum_{u \in [n]^{2p}}\sigma_{u_1u_2}\cdots \sigma_{u_{2p}u_1}\esp r_{u_1u_2}(X_n^{>L})_{u_1u_2}\cdots r_{u_{2p}u_1}(X_n^{>L})_{u_{2p}u_1}.
\end{align*}
Let $u_1\to \cdots \to u_{2p} \to u_1$ be a closed path. We define its shape $s(u)$ as the relabelling of its vertices with labels $1,2,\dots$ in order of appearance. Since the Rademacher variable is symmetric, for any path with non-zero contribution, all edges $(u_i,u_{i+1})$ must appear an even number of times on the path, and in this case, the corresponding contribution of the Rademacher variables is equal to  $1$.
Let $\mathcal{S}_{even}$ be the set of all even shapes. Note that the quantity
\begin{align*}
    T(s):=\esp (X_n^{>L})_{u_1u_2}\cdots (X_n^{>L})_{u_{2p}u_1}
\end{align*}
depends only on the shape $s(u)=s$ of the path $u$. In particular, we have
\begin{align*}
    \esp \tr(R \circ X_n^{>L})^{2p}=\sum_{s \in \mathcal{S}_{even}}T(s)\sum_{\substack{u \in [n]^{2p}\\ s(u)=s}}\sigma_{u_1u_2}\cdots \sigma_{u_{2p}u_1}.
\end{align*}
For a shape $s \in \mathcal{S}_{even}$, let $H_s=(V(s), E(s))$ be the graph generated by the vertices visited by the shape and edges given by  $(u_1,u_2),\ldots,(u_{2p},u_1)$. Denote $m(s)=|V(s)|$. Since $s$ is a path, $H_s$ is connected and we have $|E(s)| \ge m(s)-1$. Applying \cite[Lemma 2.5]{bandeira2016sharp} (see also \cite[Theorem 2.8]{latala2018dimension}) and using that $(\sigma_{ij}^2)_{1\leq i,j\leq n}$ is doubly stochastic, we have 
\begin{align*}
    \sum_{\substack{u \in [n]^{2p}\\ s(u)=s}}\sigma_{u_1u_2}\cdots \sigma_{u_{2p}u_1} \le n(\sigma_n^*)^{2p-2(m(s)-1)}.
\end{align*}
Therefore, we deduce that
\begin{equation}\label{eq: norm-upper truncated}
    \esp \tr(R \circ X_n^{>L})^{2p}\le n(\sigma_n^*)^{2p}\sum_{s \in \mathcal{S}_{even}}T(s)(\sigma_n^*)^{-2(m(s)-1)}.
\end{equation}
Now, let $k_e$ be the number of times the edge $e \in E(s)$ is traversed by the shape $s$. Then 
\begin{align*}
    T(s)=\prod_{e \in E(s)}\esp \xi^{k_e}\mathbf{1}_{|\xi| > L}.
\end{align*}
Using Cauchy--Schwartz inequality and that $\xi$ is sub-Gaussian (see \cite[Section 2.5]{vershynin2020high} for equivalent definitions of sub-Gaussian random variables), we can write
\begin{align*}
    \esp \xi^{k_e}\mathbf{1}_{|\xi| > L} \le (\esp \xi^{2k_e})^{1/2}\prob(|\xi|>L)^{1/2}\leq \tilde{C}^{k_e} e^{-cL^2}\esp g^{k_e}, 
\end{align*}
where $g \sim N(0,1)$ and $\tilde{C}$, $c$ are universal constants.

Since $\sum_{e \in E(s)}k_e=2p$ and $|E(s)| \ge m(s)-1$, putting together the above we deduce that
\begin{align*}
    T(s) \le \tilde{C}^{2p} e^{-c|E(s)|L^2}T_g(s)\leq \tilde{C}^{2p}e^{-c(m(s)-1)L^2}T_g(s),
\end{align*}
where $T_g(s)=\prod_{e \in E(s)}\esp g^{k_e}$. Replacing the above relations in \eqref{eq: norm-upper truncated}, we deduce that
\begin{align*}
     \esp \tr(R\circ X_n^{>L})^{2p}\le \tilde{C}^{2p}(\sigma_n^*)^{2p} n\sum_{s \in \mathcal{S}_{even}}T_g(s) b^{m(s)-1},
\end{align*}
where we denoted $b:=(\sigma_n^*)^{-2} e^{-cL^2}$.  
Setting $q=\lceil b\rceil +p$, using that $m(s) \le p+1$ and that
\begin{align*}
    \frac{q!}{(q-m(s))!}\ge q(q-m(s)+1)^{m(s)-1} \ge qb^{m(s)-1},
\end{align*}
we get 
\begin{align*}
     \esp \tr(R\circ X_n^{>L})^{2p}\le \tilde{C}^{2p} (\sigma_n^*)^{2p}\frac{n}{q}\sum_{s \in \mathcal{S}_{even}}T_g(s) \frac{q!}{(q-m(s))!}.
\end{align*}
Now note that for a standard $q \times q$ symmetric Gaussian matrix $G_q$, we readily have
\begin{align*}
    \esp \tr(G_q)^{2p}=\sum_{s \in \mathcal{S}_{even}}T_g(s)\frac{q!}{(q-m(s))!}.
\end{align*}
Thus, we deduce that 
\begin{align*}
    \esp \tr(R \circ X_n^{>L})^{2p}&\le \tilde{C}^{2p}(\sigma_n^*)^{2p}\frac{n}{q}\esp \tr(G_q)^{2p}\\ 
    &\leq \tilde{C}^{2p}(\sigma_n^*)^{2p} n \esp \norm{G_q}^{2p}\\ 
    &\leq \tilde{C}^{2p}(\sigma_n^*)^{2p} n(2\sqrt{q}+8\sqrt{p})^{2p},
\end{align*}
where we have used \cite[Lemma 2.2]{bandeira2016sharp} to bound the moments of the norm of a standard Gaussian matrix. 
Replacing $b$ in the above, we get that 
\begin{align}\label{eq: gamma2}
    \Gamma_2 
    \leq C'n^{1/(2p)}\left(e^{-cL^2/2}+\sigma_n^*\sqrt{p}\right),
\end{align}
for some universal constant $C'$. 
Combining \eqref{eq: gamma1} and \eqref{eq: gamma2}, and choosing $L^2=-2c^{-1}\log(\sigma_n^*\sqrt{p})$, we get that 
\begin{align}\label{ineq: final bound on X with choice L}
   \left(\esp \tr(X_n^{2p})\right)^{1/(2p)}  \le  n^{1/p} \left(2+C''\sigma_n^* \sqrt{p\log\Big(\frac{1}{\sigma_n^*\sqrt{p}}\Big)} \right),
\end{align}
for some universal constant $C''$. 
Now choosing 
\begin{equation}\label{eq: choice p}
p=\frac{\sqrt{\log n}}{\sigma_n^*},
\end{equation}
we deduce that 
\begin{align*}
\esp \Vert X_n\Vert \leq \left(\esp \Vert X_n\Vert^{2p}\right)^{\frac{1}{2p}}
\leq e^{\sigma_n^*\sqrt{\log n}} \left[2+C''\sqrt{\sigma_n^*\sqrt{\log n}} \sqrt{\log\Bigg(\frac{1}{\sqrt{\sigma_n^*\sqrt{\log n}}}}\Bigg) \right].  
\end{align*}
Since $\sigma_n^*\sqrt{\log n}\to 0$, we have that 
$$
\limsup_{n\to \infty}\esp \Vert X_n\Vert\leq 2.
$$
On the other hand, it follows from the almost sure convergence of the empirical spectral distribution (see Theorem~\ref{th: main ESD}) that almost surely,
$$
\liminf_{n\to \infty} \Vert X_n\Vert\geq 2,
$$
which, by Fatou's lemma, implies that 
$$
\liminf_{n\to \infty} \esp\Vert X_n\Vert\geq 2.
$$
This shows that $\esp\Vert X_n\Vert$ converges to $2$ whenever $\sigma_n^*\sqrt{\log n}\to 0$. 

To prove the almost sure convergence, note that for any fixed $\eta>2$, we have by Markov's inequality that 
\begin{align*}
    \prob(\Vert X_n\Vert>\eta) \le \frac{\esp \tr(X_n^{2p})}{\eta^{2p}}
    \leq n^2\left(\frac{2+C''\sqrt{\sigma_n^*\sqrt{\log n}} \sqrt{\log\big(\frac{1}{\sqrt{\sigma_n^*\sqrt{\log n}}}}\big)}{\eta}\right)^{2p}.
\end{align*}
Using the choice of $p$ in \eqref{eq: choice p} and that $\sigma_n^*\sqrt{\log n}\to 0$, it is easy to see that the above quantity is summable in $n$. A classical application of the Borel-Cantelli lemma finishes the proof. 

\begin{remark}[Communicated by Ramon van Handel]\label{remark: ramon's remark}
A compression argument along the lines of
\cite[Theorem~4.8]{latala2018dimension} can be used as the basis for an alternative proof of Theorem~\ref{th: outliers}. Such a compression argument would yield a comparison to a matrix $M$ with independent entries of the form $b|g|$,
where $b$ is a normalized Bernoulli variable (with properly chosen
probability of success) and $g$ is an independent standard Gaussian. 
By Talagrand's inequality \cite{talagrand95concentration} and Gaussian concentration, the largest eigenvalue of $M$
is subgaussian. Combining this with the Bai--Yin theorem \cite{bai1988necessary} rules out the presence of outliers in the asymptotic regime.
\end{remark}

\begin{remark}[Heavy-tailed distributions]
The above argument can be adapted to heavy-tailed distributions. Let $\xi$ be a centered Weibull distribution with shape parameter at most $2$, so that for some
$\beta \ge 1/2$ and any $p \ge 1$, we have
\begin{align*}
    &\norm{\xi}_p \le Cp^{\beta}.
\end{align*}
Let $\Sigma_n$ be a doubly stochastic matrix and $X_n=\Sigma_n \circ W_n$. Then, similarly to the proof of \cite[Theorem 4.4]{latala2018dimension} and the above computations, using $\sqrt{p} \le p^{\beta}$, we get
\begin{align*}
    \left(\esp \tr(X_n^{2p})\right)^{1/(2p)} \le n^{1/p}\left[2+C_\beta \left(e^{-c_\beta L^{1/\beta}}+\sigma_n^*L p^{\beta}\right)\right].
\end{align*}
Choosing
\begin{align*}
     L^{1/\beta}:=\frac{1}{c_\beta}\log\left(\frac{1}{\beta \sigma_n^*p^\beta}\right); \quad p:=\frac{\sqrt{\log n}}{(\sigma_n^*)^{1/(2\beta)}},
\end{align*}
we get
\begin{align*}
    \left(\esp \tr(X_n^{2p})\right)^{1/(2p)} \le e^{(\sigma_n^*\log^\beta n)^{1/(2\beta)}}\left(2+C_\beta\sqrt{\sigma_n^*\log^\beta n}\log^\beta\left(\frac{1}{\sqrt{\sigma_n^*\log^\beta n}}\right)\right).
\end{align*}
Therefore, whenever $\sigma_n^*\log^\beta n \to 0$, $X_n$ has no outliers almost surely and in expectation. This improves upon \cite[Theorem 4.4]{lata2022norms} 
since the cited theorem does not provide
the correct leading constant $2$ for the spectral norm estimate.
\end{remark}

\section{Outliers in the subcritical regime}\label{sec: outliers}

The goal of this section is to prove
Theorem~\ref{th: presence outliers}.
Note that since the random variable $\xi$ is centered and non-constant, it satisfies
\begin{equation}\label{eq: xi anticoncentrate}
   \sup_{u\in \mathbb{R}} \prob\left( \vert \xi- u\vert \geq \rho\right) \geq \rho,\quad \prob\left(\xi\geq \rho\right)\geq \rho, \quad \text{and}\quad \prob\left(\xi\leq -\rho\right)\geq \rho,
\end{equation}
for some $\rho\in (0,1)$ depending only on the distribution of $\xi$. 

Let $n$ be a large integer, and let $d=d(n)=O(\log n)$ and $d=\omega(1)$.
To simplify the exposition, we will assume that $n/(d+1)$ is an integer; we note that our construction below can be easily adapted to cover all admissible choices of $d=d(n)$.
Consider a $d$-regular graph $G_n$ on $n$ vertices whose adjacency matrix $A_n$ is block diagonal with $n/(d+1)$ blocks of all ones (excluding the entries on the main diagonal). The graph $G_n$ is thus a disjoint union of $n/(d+1)$ cliques of size $d+1$ each. 
For every integer $k$, we denote by $\tilde W_k$ the $k\times k$ symmetric matrix whose entries above the main diagonal are iid copies of $\xi$, and the main diagonal is zero. 
Finally, we define $X_n:=\frac{1}{\sqrt{d}} A_n\circ \tilde W_n$, so that
$X_n$ is block diagonal with $n/(d+1)$ iid blocks, where each block
is equidistributed with $\frac{1}{\sqrt{d}} \tilde W_{d+1}$. We denote these blocks by $W_{d+1}^{(i)}$, $i=1,\ldots, n/(d+1)$. 
To prove Proposition~\ref{th: presence outliers}, we will show that $\liminf_{n\to \infty}\Vert X_n\Vert$ is almost surely bounded away from $2$. 
We will need the following lemma:
\begin{lemma}\label{lem: lower bound norm}
    For every $\varepsilon>0$ there is $\delta>0$
    depending on $\varepsilon$ and the distribution of $\xi$ with the following property. Let $k$ be a sufficiently large integer, and let $\tilde W_{k+1}$
    be the matrix defined as above. Then
       \begin{align*}
         \prob\left(\norm{\tilde W_{k+1}} \ge (2+\delta)\sqrt{k}\right) \ge \exp(-\varepsilon k).
    \end{align*}
\end{lemma}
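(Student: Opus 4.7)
The strategy is to exhibit, with probability at least $\exp(-\varepsilon k)$, a $2$-dimensional test subspace on which the Rayleigh quotient of $\tilde W_{k+1}$ exceeds $(2+\delta)\sqrt{k}$. The construction combines the Bai--Yin lower bound for a principal minor of $\tilde W_{k+1}$ with a planted alignment of the adjacent row to the minor's top eigenvector, the alignment being generated from the anti-concentration hypothesis \eqref{eq: xi anticoncentrate}. We may assume $\varepsilon$ is small, since larger $\varepsilon$ only weakens the conclusion: any $\delta$ which works for some $\varepsilon_0$ also works for every $\varepsilon \ge \varepsilon_0$.

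Write $W' := \tilde W_{k+1}\big|_{\{2,\ldots,k+1\}^2}$ for the $k \times k$ principal Wigner submatrix, and $\mathbf{w} := (w_{12},\ldots,w_{1,k+1})$; these are independent. By a Bai--Yin-type lower bound on the top eigenvalue together with the no-gaps delocalization of \cite{rudelson2016no} (applicable under \eqref{eq: xi anticoncentrate}), there exist universal constants $c_1, c_2 > 0$ such that with probability at least $1/2$, $W'$ satisfies $\lambda := \lambda_1(W') \ge 2\sqrt{k}(1-o(1))$, and its unit top eigenvector $u$ has
\[
\big|\{j : |u_j| \ge c_2/\sqrt{k}\}\big| \ge c_1 k .
\]
Call this set $J = J(W')$, set $\alpha := \varepsilon/(4\log(1/\rho))$ and $s := \lceil \alpha k \rceil$, and make a measurable selection $J^* \subset J$ of size $s$ (valid for $\varepsilon$ small enough that $\alpha \le c_1$). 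Conditional on $W'$ satisfying the above, let
\[
E := \bigl\{\operatorname{sign}(w_{1j}) = \operatorname{sign}(u_j)\ \text{and}\ |w_{1j}| \ge \rho\ \text{for every}\ j \in J^*\bigr\}.
\]
Independence of $\mathbf{w}$ from $W'$ and \eqref{eq: xi anticoncentrate} yield $\prob(E \mid W') \ge \rho^s = \exp(-\varepsilon k / 4)$.

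On $E$, the cross term $L := \sum_{j=2}^{k+1} w_{1j} u_j$ decomposes as
\[
L = \sum_{j \in J^*}|w_{1j}||u_j| + \sum_{j \notin J^*} w_{1j} u_j \ge \rho c_2 \alpha \sqrt{k} - R,
\]
where $R := \bigl|\sum_{j \notin J^*} w_{1j} u_j\bigr|$ is, conditional on $W'$ and $E$, a centered sum with variance at most $1$; hence $R \le 4$ with probability $\ge 1/2$ by Chebyshev, giving $L \ge c\sqrt{k}$ for $k$ large with $c := \rho c_2 \alpha/2$. Applying the variational principle to the $2$D subspace spanned by the orthonormal vectors $e_1$ and $\bar u := (0,u) \in \mathbb{R}^{k+1}$, the restriction of $\tilde W_{k+1}$ to this subspace is the $2 \times 2$ matrix $\begin{pmatrix}0 & L \\ L & \lambda\end{pmatrix}$, so
\[
\|\tilde W_{k+1}\| \ge \frac{\lambda + \sqrt{\lambda^2+4L^2}}{2} \ge \sqrt{k}\bigl(1+\sqrt{1+c^2}\bigr)(1-o(1)) \ge (2+\delta)\sqrt{k}
\]
for $k$ large, where $\delta := c^2/8$. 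Combining the three probability bounds yields
\[
\prob\bigl(\|\tilde W_{k+1}\| \ge (2+\delta)\sqrt{k}\bigr) \ge \tfrac{1}{4}\exp(-\varepsilon k/4) \ge \exp(-\varepsilon k)
\]
for $k$ large.

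The main difficulty will be securing the delocalization step under only the bounded fourth moment assumption. Fortunately, the no-gaps theorem of \cite{rudelson2016no} holds under the mild anti-concentration hypothesis \eqref{eq: xi anticoncentrate}, so it applies directly; as a fallback, one could truncate the entries of $W'$ at a level calibrated via the fourth moment, apply delocalization to the truncated matrix, and transfer the conclusion back by a Davis--Kahan perturbation estimate. A minor bookkeeping point is that the resulting $\delta$ depends quadratically on $\varepsilon$ and through $\rho$ on the distribution of $\xi$, matching the form claimed in the lemma.
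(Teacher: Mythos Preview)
Your proof is correct and follows essentially the same approach as the paper: exploit incompressibility of an eigenvector of the $k\times k$ principal minor near the spectral edge (via \cite{rudelson2016no} and the semicircle law), then force a large inner product with the remaining row by aligning $\Theta(\varepsilon k)$ of its entries with the eigenvector using \eqref{eq: xi anticoncentrate}, controlling the residual sum by Chebyshev. The only difference is cosmetic: the paper lower-bounds $\|\tilde W_{k+1}\|$ by $\|\tilde W_{k+1}\,z\|_2=\sqrt{\|M'v\|_2^2+L^2}$ for the single test vector $z=(v,0)$ and then chooses $\alpha$ so that $(2-\alpha)^2+\gamma^2>4$, whereas you pass to the $2\times 2$ compression on $\mathrm{span}(e_1,\bar u)$ to obtain the slightly sharper $\tfrac{\lambda+\sqrt{\lambda^2+4L^2}}{2}$ --- both yield a fixed $\delta>0$ depending on $\varepsilon$ and the law of $\xi$.
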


We provide the proof of the lemma at the end of the subsection.
At this point, let $C>0$ be the constant such that $d=d(n)\leq C\log n$
for all sufficiently large $n$. We choose $\varepsilon:=\frac{1}{2C}$.
Following Lemma~\ref{lem: lower bound norm},
there is $\delta\in (0,1)$ depending on $C$
and the distribution of $\xi$ such that
\begin{align*}
    \prob(\Vert X_n\Vert \ge 2+\delta)&=\prob\left(\max_{i=1,\ldots,n/(d+1)}\Vert W^{(i)}_{d+1}\Vert \ge (2+\delta)\sqrt{d}\right)\\
    &= 1- \prob\left(\Vert \tilde W_{d+1}\Vert <  (2+\delta)\sqrt{d}\right)^{\frac{n}{d+1}}\\
    &\geq 1-\left(1-e^{-\varepsilon d}\right)^{\frac{n}{d+1}}\\
    &\geq 1-\left(1-\frac{1}{\sqrt{n}}\right)^{\frac{n}{d+1}}\\
    &\geq 1-e^{-n^{1/4}},
\end{align*}
for all large $n$.
A direct application of the Borel--Cantelli lemma implies the result.

\bigskip
\begin{remark}\label{rk: seginer}
 A weaker version of Lemma~\ref{lem: lower bound norm} follows easily by an adaptation of an argument in \cite{seginer2000expected} (see the proof of Theorem~3.2 there). Indeed, one can write 
 $$
\prob\left(\norm{\tilde W_{k+1}} \ge 3 \sqrt{k}\right)
\geq 
\prob\left(\xi\geq \rho\right)^{\frac{9}{\rho^2} k}. 
 $$
This follows since the norm of  $\tilde W_{k+1}$ is bounded below by the norm of its $(\frac{3}{\rho} \sqrt{k})\times (\frac{3}{\rho} \sqrt{k})$ submatrix. The latter is larger than $3 \sqrt{k}$ if all entries are larger than $\rho$. This trivial observation gives a version of Lemma~\ref{lem: lower bound norm} for some $\varepsilon$ (rather than for any $\varepsilon$). One can easily check that using this, we have $\frac{\Vert X_n\Vert}{2}\to \infty$ almost surely whenever $d/\log n\to 0$. In this regime, the above argument indicates that the presence of outliers is caused by the emergence of a submatrix of size $O(\sqrt{\log n})$ with large entries of equal signs. 
  
The extra quantification (in terms of $\varepsilon$)  present in Lemma~\ref{lem: lower bound norm} is needed to treat the case of $d$ of order $\log n$. 
On the other hand, we note that stronger and sharper
statements than Lemma~\ref{lem: lower bound norm} 
follow from the large deviation principle for the largest eigenvalue of Wigner matrices \cite{augeri2021largedeviation,guionnet2020rademacher,CDG2023}; however, those results only cover particular cases of sub-Gaussian random variables.
\end{remark}

\begin{proof}[Proof of Lemma~\ref{lem: lower bound norm}]
We start the proof with the following observation.
For every choice of parameters $\varepsilon,\beta>0$
there is $\gamma>0$ depending on $\beta,\varepsilon$ and the distribution
of $\xi$ with the following property. Assuming $k$
is sufficiently large, letting $v$ be a non-random
unit vector in $\R^k$ with
$$
\big|\big\{i\leq k:\;|v_i|\geq \beta/\sqrt{k} \big\}\big|
\geq \beta\,k,
$$
and taking $\xi_1,\dots,\xi_k$ to be iid copies of $\xi$,
we have
$$
\prob\bigg(\Big|\sum_{i=1}^k v_i\xi_i\Big|\geq \gamma\,\sqrt{k}
\bigg)\geq \exp\big(-\varepsilon k/2\big).
$$
Indeed, assuming $k$ is large enough, we take
$\ell$ to be the largest integer bounded above by $\beta\,k$ and
such that $\rho^\ell\geq \exp\big(-\varepsilon k/4\big)$,
where $\rho$ is taken from \eqref{eq: xi anticoncentrate}
(observe that $\ell$ is of order $k$).
Further, let $I\subset[k]$ be a non-random subset of cardinality $\ell$
such that $|v_i|\geq \beta/\sqrt{k}$ for every $i\in I$.
In view of \eqref{eq: xi anticoncentrate} and the choice of $\ell$, with probability
at least $\exp\big(-\varepsilon k/4\big)$ we have
$$
\sum_{i\in I} v_i\xi_i\geq \frac{\beta}{\sqrt{k}}\cdot\rho\ell.
$$
On the other hand, since the variance of
$
\sum_{i\in [k]\setminus I} v_i\xi_i
$
is less than one and in view of Markov inequality,
with a probability of at least $3/4$, we have
$$
\bigg|\sum_{i\in [k]\setminus I} v_i\xi_i\bigg|\leq 2.
$$
Combining the two estimates, we get that
$$
\sum_{i=1}^k v_i\xi_i\geq \frac{\beta}{\sqrt{k}}\cdot\rho\ell-2
$$
with probability at least $\frac{3}{4}\exp\big(-\varepsilon k/4\big)
\geq \exp\big(-\varepsilon k/2\big)$, and the claim follows.

\medskip

For convenience, we will use the compact notation $M:=\tilde W_{k+1}$,
and we let $M'$ be the top left $k\times k$ principal submatrix of $M$.
Let $\alpha>0$ be a small parameter (depending on $\varepsilon$ and
the distribution of $\xi$) which will be determined later.
The standard covering arguments imply that, as long as $k$
is sufficiently large, with a probability of at least $0.999$ 
{\it every} eigenvector of $M'$ is {\it incompressible} (see \cite{rudelson2016no}). On the other hand, as a consequence of the Wigner
semicircle law (Theorem~\ref{th: main ESD}), with a
probability of at least 
$0.999$ there is an eigenvector $v$ of $M'$
with $\|M'v\|_2\geq (2-\alpha)\sqrt{k}$.
To summarize, we can define an $M'$--measurable random unit vector $v$
satisfying
$$
\prob\big(\|M'v\|_2\geq (2-\alpha)\sqrt{k},\;\;
\big|\big\{i\leq k:\;|v_i|\geq \beta/\sqrt{k} \big\}\big|
\geq \beta\,k\big)\geq 0.99,
$$
where $\beta$ depends on the distribution of $\xi$ but not on $\alpha$.
Let $z$ be a vector in $\R^{k+1}$
obtained from $v$ by adding zero coordinate.
Our goal is to estimate the probability that $\|Mz\|_2$
is bounded away from two.
Denote the $(k+1)$-st row of $M$ by $\row_{k+1}$.
We have
    \begin{align}\label{ineq: norm W_d with norm W_d-1}
        \norm{M}^2 \ge \norm{Mz}^2_2
        =\norm{M'v}_2^2+\langle \row_{k+1},z\rangle^2.
    \end{align}
In view of the definition of $v$, we have
\begin{equation}\label{aljkhbreoguygor}
\norm{M'v}^2_2\geq (2-\alpha)^2\,k
\end{equation}
with probability at least $0.99$.
On the other hand, conditioned on any realization of $M'$
such that \eqref{aljkhbreoguygor} holds and
$$
\big|\big\{i\leq k:\;|v_i|\geq \beta/\sqrt{k} \big\}\big|
\geq \beta\,k,
$$
we obtain from the observation at the beginning of the proof
that
$$
\prob_{\row_{k+1}}\big(|\langle \row_{k+1},z\rangle|\geq \gamma\sqrt{k}\big)
\geq\exp\big(-\varepsilon k/2\big),
$$
for some $\gamma$ depending on $\beta$, $\varepsilon$, and the distribution of $\xi$ (but not on $\alpha$).
It remains to choose $\alpha$ so that
$(2-\alpha)^2+\gamma^2>4$, and the proof is complete.

\end{proof}

\newpage

\appendix
\section{The limiting Empirical Spectral Distribution:
A Moment Method Approach}\label{sec: universal ESD}

The goal of this appendix is to provide a proof of Theorem~\ref{th: main ESD}
based on the moment method.
We will assume that the variance profile $\Sigma_n$ satisfies $\sigma_n^*\to 0$. 
By a standard truncation argument (see \cite[Section 2.4.1]{tao2012randommatrix}),
it is sufficient to deal with matrices with bounded entries and zero diagonal.  
More precisely, we let $(L_n)_{n\geq 1}$ be a sequence of numbers
defined by
\begin{equation}\label{eq: choice L}
    L_n:=\min\left\{\frac{1}{\sqrt{\sigma_n^*}},\log^cn\right\}
\end{equation}
(so that $L_n\to\infty$),
and from now on we will
assume that the random matrices
$W_n=(w_{ij})_{1\leq i,j\leq n}$ have i.i.d centered entries
with unit variances and are {\it uniformly bounded by $L_n$}.
Here, $c \ge 1$ is a fixed large constant. 

The convergence of the ESD of $X_n$ is characterized by the convergence of its moments. More precisely, we aim to show that for every integer $k\geq 1$, the sequence $\big(\frac{1}{n}\tr(X_n^k)\big)_{n\geq 1}$ converges almost surely and in expectation to the corresponding $k$th moment of the semi-circle distribution. 

Given an integer vector $u \in [n]^k$, we will treat it
as a closed path $u=u_1 \to \cdots \to u_k \to u_1$ in the complete graph over $[n]$. The path $u$ generates a (connected) subgraph $H_u$ whose vertices are $\{u_1,\ldots,u_k\}$ and the edges are $\{(u_l,u_{l+1}):l \in [k]\}$, where $u_{k+1}=u_1$. With some abuse of notation, we will fix a labeling of the vertices of $H_u$, namely, $V(H_u)=\{1,\ldots, |V(H_u)|\}$. Let $q_u(e)$ be the number of times the edge $e \in E(H_u)$ is traversed by the path $u$, so that $\sum_e q_u(e)=k$. 
Finally, for every $u\in [n]^k$, we denote 
$$
\sigma_u:= \sigma_{u_1u_2}\ldots \sigma_{u_ku_1}\quad\text{and}\quad w_u=w_{u_1u_2}\ldots w_{u_ku_1}. 
$$

\subsection{The convergence in expectation} 
To prove the convergence in expectation, we write 
\begin{align*}
\frac{1}{n}\esp \tr (X^k_n)=\frac{1}{n}\sum_{u \in [n]^k}\sigma_u\esp w_u.
\end{align*}
Since $W_n$ is centered, the case $k=1$ is trivial. Thus, we suppose in the sequel that $k\geq 2$. 
Note that for a given $u\in [n]^k$, each edge $e \in E(H_u)$ must be traversed at least twice; otherwise, their contribution is zero as the variables are independent and centered. This implies that $q_u(e)\geq 2$ for every edge $e \in E(H_u)$ and therefore that $|E(H_u)| \le k/2$. 
Since $H_u$ is a connected graph, we have 
$$
|V(H_u)| \le |E(H_u)|+1\le k/2+1.
$$
We will write $q_u\geq 2$ (resp. $q_u=2$) to encode the fact that $q_u(e)\geq 2$ (resp. $q_u=2$) for every edge $e \in E(H_u)$. 
Note that when $q_u\geq 2$, we have 
\begin{align}\label{ineq: bound summands}
    \esp w_u \leq L_n^{k-2|E(H_u)|}\leq L_n^{k-2(|V(H_u)|-1)},
\end{align}
where we used that the entries of $W_n$ are of unit variance and uniformly bounded by $L_n$.

Applying \cite[Lemma 2.5]{bandeira2016sharp} (see also \cite[Theorem 2.8]{latala2018dimension}) and using that $(\sigma_{ij}^2)_{1\leq i,j\leq n}$ is doubly stochastic, we can write for every $m\leq k/2+1$ that
\begin{align}\label{ineq:vertices low}
\frac{1}{n}\sum_{\substack{u \in [n]^k\\ q_u \ge 2, |V(H_u)|= m}}\sigma_u\esp w_u   \le (L_n\sigma_n^*)^{k-2(m-1)}= (\sqrt{\sigma_n^*})^{k-2(m-1)},
\end{align}
by the choice of $L_n$ in \eqref{eq: choice L}. Note that whenever $m \le k/2$, we get that $k-2(m-1) \ge 2$ and the right-hand side of \eqref{ineq:vertices low} goes to $0$. Therefore, we can restrict to the case where $|V(H_u)|=k/2+1$, that is, $H_u$ is a tree with $|E(H_u)|=k/2$ edges. Moreover, $q_u(e)=2$ for all edges (otherwise $|E(H_u)|<k/2$). Hence, we get that
\begin{align*}
\lim_{n \to \infty }\frac{1}{n}\esp \tr (X^k_n)=\lim_{n \to \infty}\frac{1}{n}\sum_{\substack{u \in [n]^k\\ q_u=2;H_u\text{ is a tree}}}\sigma_u\esp w_u.
\end{align*}
In this case, $\esp w_u=1$, so that 
\begin{align*}
\lim_{n \to \infty }\frac{1}{n}\esp \tr (X^k_n)=\lim_{n \to \infty}\frac{1}{n}\sum_{\substack{u \in [n]^k\\q_u=2;H_u\text{ is a tree}}}\sigma_u.  
\end{align*}
Consider now the set $\mathcal{G}_k$ of all pairs $(G,t)$ where $G$ is a tree over $[k/2+1]$ with a closed walk $t=t_1\to \cdots \to t_k \to t_1$ that traverses each edge exactly twice. Then
\begin{align*}
\lim_{n \to \infty }\frac{1}{n}\esp \tr (X^k_n)=\lim_{n \to \infty}\sum_{(G,t) \in \mathcal{G}_k}\frac{1}{n}\sum_{\substack{u \in [n]^k\\ (H_u,u)=(G,t)}}\sigma_u.
\end{align*}
As $G$ is a tree and each edge is traversed twice, we can sum up over leaves. Using double stochasticity and induction over leaves, we have
\begin{align*}
    \frac{1}{n}\sum_{\substack{u \in [n]^k\\ (H_u,u)=(G,t)}}\sigma_u=\frac{1}{n}\sum_{i,j \in [n]}\sigma_{ij}^2=1.
\end{align*}
This also follows by a similar argument done in \cite[Lemma 2.5]{bandeira2016sharp}. By \cite[Exercise 4.4.1]{kemp2013introduction}, it is known that $|\mathcal{G}_k|=|NC_2(k)|$ which is precisely the $k$th moment of the semi-circle distribution. Hence, the result follows.

\subsection{Tail bound and almost-sure convergence}
By the Borel-Cantelli lemma, the following proposition immediately implies almost sure convergence of the ESD of $X_n$. 

\begin{proposition}\label{proposition: trace-tails} 
    For all $k \ge 1$, $p \ge 2$, and $\eps \in (0,1/2)$, we have
    \[
        \mathbb P\left[\frac{1}{n}\left|\tr(X_n^k) - \mathbb E[\tr(X_n^k)]\right| > n^{-1/2+\eps}\right] \le \frac{2}{n^p},
    \]
    for all $n$ sufficiently large depending on $k,p$ and $\eps$.
\end{proposition}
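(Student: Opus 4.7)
The plan is to bound a high moment $\esp[Y^{2q}]$ of $Y:=\tr(X_n^k)-\esp\tr(X_n^k)$ for a well-chosen integer $q=q(p,\eps)$, and then apply Markov's inequality. Fix $p\geq 2$ and $\eps\in(0,1/2)$, and choose $q$ so that $2q\eps>p$. Markov gives
$$
\prob\!\left(\tfrac{1}{n}|Y|>n^{-1/2+\eps}\right)\le \frac{\esp[Y^{2q}]}{n^{q+2q\eps}},
$$
so it suffices to establish an estimate of the form $\esp[Y^{2q}]\le C_{k,q}\,n^q\,(\log n)^{C'_{k,q}}$. Such a bound, combined with $2q\eps>p$ and the truncation $L_n\le\log^c n$, yields the claimed tail probability for all $n$ sufficiently large.

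The starting point is the expansion
$$
\esp[Y^{2q}]=\sum_{u^{(1)},\ldots,u^{(2q)}\in[n]^k}\sigma_{u^{(1)}}\cdots\sigma_{u^{(2q)}}\,\esp\!\prod_{i=1}^{2q}\bigl(w_{u^{(i)}}-\esp w_{u^{(i)}}\bigr).
$$
The key combinatorial observation is that the centered joint moment vanishes unless every edge $e$ of the merged multigraph $H=H_{u^{(1)}\cup\cdots\cup u^{(2q)}}$ is traversed at least twice in total across the $2q$ paths. Indeed, if some edge $e$ appears with total multiplicity one---say only in path $u^{(i_0)}$, once---then $\esp w_{u^{(i_0)}}=0$ (because $\esp w_e=0$ factors out), so $w_{u^{(i_0)}}-\esp w_{u^{(i_0)}}=w_{u^{(i_0)}}$ is linear in $w_e$; since $w_e$ is independent of every other factor, the expectation factorizes and is zero. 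In particular, no single path is edge-disjoint from all the others (else it would contribute an independent zero-mean factor). Hence the ``share-edge'' graph on $[2q]$ (where $i\sim j$ iff $u^{(i)}$ and $u^{(j)}$ share an undirected edge) has no isolated vertex, so its number of connected components is at most $q$. Letting $r$ denote the number of connected components of $H$ (so $r\le q$), pairing up the $2qk$ edge traversals gives $|E(H)|\le qk$, and therefore $|V(H)|\le |E(H)|+r\le qk+q$ via the spanning-forest bound.

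Grouping the $2q$-tuples by their \emph{shape}---i.e., isomorphism class of the merged graph together with the embeddings of the $2q$ closed walks---reduces the sum to finitely many types, a number bounded by a constant depending only on $k$ and $q$. For each shape, applying \cite[Theorem~2.8]{latala2018dimension} (cf.~\cite[Lemma~2.5]{bandeira2016sharp}) separately to each connected component of $H$ and multiplying yields
$$
\sum_{\text{vertex labelings}}\prod_{e\in E(H)}\sigma_e^{n_e}\,\le\,n^r\cdot(\sigma_n^*)^{2qk-2(|V(H)|-r)}\,\le\, n^q,
$$
the non-negativity of the $\sigma_n^*$ exponent being exactly the vertex count above. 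Combining with the trivial bound $|\esp\prod_i(w_{u^{(i)}}-\esp w_{u^{(i)}})|\le (2L_n^k)^{2q}$ coming from $|w_{ij}|\le L_n\le\log^c n$ yields $\esp[Y^{2q}]\le C'_{k,q}\,n^q\,(\log n)^{2qkc}$, which plugs into Markov to give the claimed $2/n^p$ bound for $n$ large. The main technical obstacle is the vanishing criterion (each edge traversed at least twice) for general, possibly asymmetric, sub-Gaussian $\xi$, which requires careful tracking of the expansion; adapting \cite[Theorem~2.8]{latala2018dimension} from the connected to the multi-component setting is a routine factorization, but one must verify that the global $\sigma_n^*$ exponent stays non-negative via the count $|V(H)|\le qk+r$.
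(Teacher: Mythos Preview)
Your proof is correct and takes a genuinely different route from the paper's. The paper argues via a martingale decomposition: it writes $\tr(X_n^k)-\esp\tr(X_n^k)=\sum_t\Delta_t$ where $\Delta_t$ are the increments obtained by revealing the entries of $X_n$ one at a time, establishes a high-probability bound $|\Delta_t|\le n^{\eps/2}\sigma_t$ on each increment (itself via a $p'$th-moment estimate on $\Delta_t$ using the graph-counting machinery from \cite{latala2018dimension}, applied to $p'$-tuples of paths all passing through the fixed edge $t$), and then invokes Azuma--Hoeffding. You instead bound the $2q$th moment of $Y$ directly by expanding over $2q$-tuples of closed paths, using the two vanishing observations (each edge traversed at least twice; no path edge-disjoint from the rest) to force $|E(H)|\le qk$ and $r\le q$, and then applying \cite[Theorem~2.8]{latala2018dimension} componentwise.

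Your route is more elementary in that it avoids the martingale layer entirely, and the two vanishing claims you flag as the main obstacle are in fact clean: in both cases the relevant factor is independent of the remaining ones and has mean zero, so no delicate expansion is needed. The paper's approach, by contrast, localizes the combinatorics to tuples of paths all containing a fixed edge $t$; this gives connectedness of $H$ for free and a slightly sharper intermediate estimate $\esp\Delta_t^p\le C_{p,k}\sigma_t^p L_n^{p-2}$, but then requires the additional Azuma--Hoeffding step to assemble the increments. Both arguments rest on the same key input, the $W^q(G)$ bound of \cite{latala2018dimension}, and both exploit the truncation $L_n\le\log^c n$ to absorb polylogarithmic losses into the margin $2q\eps>p$.
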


\begin{proof}
    Let $\FF_t$ be the $\sigma$-algebra generated by revealing the first $t$ of the entries on or above the diagonal of $X$ under some arbitrary ordering. Denote $\E[t]{\cdot}:= \E{\cdot|\FF_t}$. In this notation,
    \begin{align*}
        \tr(X^k_n) - \E{\tr(X^k_n)} &= \sum_{t = 1}^{n(n+1)/2} \E[t]{\tr(X^k_n)} - \E[t-1]{\tr(X^k_n)} \\
        & =: \sum_{t=1}^{n(n+1)/2} \Delta_t \,.
    \end{align*}
    The key estimate is the following ``bounded difference'' estimate for the martingale increments $\Delta_t$. 
    For all $t \in [n(n+1)/2]$, 
        \begin{align}\label{ineq: trace-tails-delta_t}
            \PP{|\Delta_t|^2 > n^{\eps}\sigma_t^2} \le \frac{1}{n^{p+2}}\,,
        \end{align}
    Once established, this readily implies Proposition \ref{proposition: trace-tails}. By union bound:
    \[
        \PP{\exists\, t \in [n(n+1)/2]\,:\,|\Delta_t| > n^{\eps/2}\sigma_t} \le \frac{1}{n^p}
    \]
    Thus, by the Azuma--Hoeffding inequality for martingales with bounded increments, since $\pa{\sigma_{ij}^2}_{ij}$ is doubly stochastic:
    \begin{align*}
        \PP{\ba{\tr(X^k) - \E{\tr (X^k)} } > y} &\le \frac{1}{n^p} + \PP{\ba{\tr(X^k) - \E{\tr (X^k)} }> y,\,\Delta_t^2 \le n^{\eps}\sigma_t^2 ~\forall t} \\
        &\le \frac{1}{n^p} + \cE{-\frac{y^2}{2 n^{1+\eps}}}\,. 
    \end{align*}

    Taking $y = n^{\frac{1}{2}+\eps}$ concludes the proof. We turn towards proving the proposition. \\

    Our strategy is to compute the $p$'th moment of $\Delta_t$ and then apply Markov's inequality, where $p$ is a large even integer independent of $n$. Fix $t$ to be the edge $(i,j)$ where $(i,j)$ is the $t$'th entry of $A$. Let $p$ be an even positive integer. We will say that $t \in u$ if $(u_l,u_{l+1})=t$ or $(u_{l+1},u_l)=t$ for some $l \in [k]$. Noting that $\E[t]{w_u} = \E[t-1]{w_u}$ if $t \not\in u$, we obtain
    \begin{align*}
        \E{\Delta_t^p} &= \E{\pa{\sum_{u \in [n]^k} \sigma_u\left(\E[t]{w_u} - \E[t-1]{w_u}\right)}^p} = \E{\pa{\sum_{\substack{u \in [n]^k\\ t \in u}} \sigma_u\left(\E[t]{w_u} - \E[t-1]{w_u}\right)}^p}\,.
    \end{align*}
    That is,
    \begin{align*}
        \E{\Delta_t^p}=\sum_{\substack{\mathbf{u}=(u^{(1)},\ldots,u^{(p)}) \in [n]^{kp}\\ t \in u^{(l)},\forall l \in [p]}}\sigma_{u^{(1)}}\cdots \sigma_{u^{(p)}}\esp \left(\prod_{l \in [p]}\left\{\E[t]{w_{u^{(l)}}}-\E[t-1]{w_{u^{(l)}}}\right\}\right).
    \end{align*}    
    Denote $H=H_{\mathbf{u}}$ the subgraph generated by $\mathbf{u}$. Let $q(e)=q_{\mathbf{u}}(e)$ be the number of times an edge $e \in E(H)$ is traversed in $\mathbf{u}$. As before, we only consider $\mathbf{u}$ such that $q(e)\ge 2$ for all $e \in E(H)$. Since all paths $u^{(l)}$ contain the edge $t \in E(H)$, we have that $H$ is connected and $q(t) \ge p$. We first bound the contribution of $W_n$.
    \begin{align*}
        \left|\esp \left(\prod_{l \in [p]}\left\{\E[t]{w_{u^{(l)}}}-\E[t-1]{w_{u^{(l)}}}\right\}\right)\right| &\le C_p\max_{r \in \{t,t-1\}^p}\esp \left(\prod_{l \in [p]}\E[r_l]{w_{u^{(l)}}}\right).
    \end{align*}
    Since $|w_{ij}| \le L_n$ and $w_{ij}$ has unit variance, recalling \eqref{ineq: bound summands}, 
    \begin{align}\label{ineq: bound qk moment}
        \left|\esp \left(\prod_{l \in [p]}\left\{\E[t]{w_{u^{(l)}}}-\E[t-1]{w_{u^{(l)}}}\right\}\right)\right| &\le C_pL_n^{pk-2(|V(H)|-1)}\,.
    \end{align}
    Thus, it suffices to control
    \begin{align*}
        \sum_{\substack{\mathbf{u} \in [n]^{pk}:\\(H_{\mathbf{u}},q_{\mathbf{u}})=(H,q),\\ t \in u^{(l)}, \forall l \in [p]}}\sigma_{u^{(1)}}\cdots \sigma_{u^{(q)}}\le \sum_{\substack{u \in [n]^{V(H)}\\ (u_i,u_j)=(i,j)}} \prod_{e=(a,b) \in E(H)}\sigma_{u_au_b}^{q(e)}.
    \end{align*}
    In order to use the results from \cite{latala2018dimension}, we will write this quantity in a better way. First, as $\sigma_{uv} \le \sigma_n^*$ for all $u,v \in [n]$, for any spanning tree $T=(V(T),E(T))$ of $H$ with $t \in E(T)$, we have
    \begin{align*}
        \sum_{\substack{u \in [n]^{V(H)}\\ (u_i,u_j)=(i,j)}} \prod_{e=(a,b) \in E(H)}\sigma_{u_au_b}^{q(e)} \le \prod_{e \in E(H)\setminus E(T)}(\sigma_n^*)^{q(e)}\sum_{\substack{u \in [n]^{V(T)}\\ (u_i,u_j)=(i,j)}} \prod_{e=(a,b) \in E(T)}\sigma_{u_au_b}^{q(e)}.
    \end{align*}
    Now define the following collection of (symmetric) matrices $(b^{(e)})_{e \in E(T)}$. 
    \begin{enumerate}
        \item If $e=t$, then $b^{(t)}_{ij}=b^{(t)}_{ji}:=\sigma_{ij}^{q(t)}$ and $0$ otherwise.
        \item If $e$ is incident to $i$, set $b^{(e)}_{ui}=b^{(e)}_{iu}=\sigma^{q(e)}_{ui}$ for all $u \in [n]$ and zero otherwise.
        \item If $e$ is incident to $j$, set $b^{(e)}_{uj}=b^{(e)}_{ju}=\sigma^{q(e)}_{uj}$ for all $u \in [n]$ and zero otherwise.
        \item If $e$ is not incident to neither $j$ nor $i$, set $b^{(e)}_{uv}=\sigma_{uv}^{q(e)}$ for all $u,v \in [n]$.
    \end{enumerate}
    Then it is immediate to see that
    \begin{align*}
        \sum_{\substack{u \in [n]^{V(T)}\\ (u_i,u_j)=(i,j)}} \prod_{e=(a,b) \in E(T)}\sigma_{u_au_b}^{q(e)} &\le \sum_{u \in [n]^{V(T)}} \prod_{e=(a,b) \in E(T)}b^{(e)}_{u_au_b}\\
        &=:W^b(T).
    \end{align*}
    We can readily use \cite[Lemma 2.10]{latala2018dimension} for $W^b(T)$ and deduce that
    \begin{align*}
        W^b(T) \le \prod_{e \in E(T)}\left\{\sum_{u \in [n]}\left(\sum_{v \in [n]}b^{(e)}_{uv}\right)^{p_e}\right\}^{1/p_e},
    \end{align*}
    where $(p_e)_{e \in E(T)}$ is any collection of conjugate exponents (namely $p_e \ge 1$ for all $e$, and $\sum_e 1/p_e = 1$). We set $p_e=\infty$ for all $e \ne t$ and $p_t=1$, so
    \begin{align*}
        W^b(T) \le \sigma_{t}^{q(t)}\prod_{e \in E(T)\setminus\{t\}}\max_{u \in [n]}\sum_{v \in [n]}b^{(e)}_{uv}. 
    \end{align*}
    Since $q(e) \ge 2$ for all $e \in E(H)$ and $\sigma_{uv} \le \sigma_n^*$, by double stochasticity, we get
    \begin{align*}
        W^b(T) \le \sigma_{t}^{q(t)} \prod_{e \in E(T)\setminus \{t\}}(\sigma_n^*)^{q(e)-2}.
    \end{align*}
    We deduce that
    \begin{align*}
        \sum_{\substack{\mathbf{u} \in [n]^{pk}:\\(H_{\mathbf{u}},q_{\mathbf{u}})=(H,q),\\ t \in u^{(l)} \forall l \in [p]}}\sigma_{u^{(1)}}\cdots \sigma_{u^{(q)}} \le \sigma_{t}^{q(t)} \prod_{e \in E(T)\setminus \{t\}}(\sigma_n^*)^{q(e)-2}\prod_{e \in E(H)\setminus E(T)}(\sigma_n^*)^{q(e)}.
    \end{align*}
    As $\sum_{e \in E(H)}q(e)=pk$, $q(t) \ge p$ and $|E(T)|=|V(H)|-1$, we get
    \begin{align}\label{ineq: sigma p-moment of delta_t}
        \sum_{\substack{\mathbf{u} \in [n]^{pk}:\\(H_{\mathbf{u}},q_{\mathbf{u}})=(H,q),\\ t \in u^{(l)}, \forall l \in [p]}}\sigma_{u^{(1)}}\cdots \sigma_{u^{(q)}} \le \sigma_{t}^p (\sigma_n^*)^{pk-p-2(|V(H)|-2)}.
    \end{align}
    Combining \eqref{ineq: bound qk moment} and \eqref{ineq: sigma p-moment of delta_t}, we get
    \begin{align*}
        \esp \Delta_t^p \le C_p(L_n\sigma_{t})^{p}\sum_{(H,q)} (L_n\sigma_n^*)^{pk-p-2(|V(H)|-1)} (\sigma_n^*)^2.
    \end{align*}
    To conclude the argument, since $q(t) \ge p$ and $q(e) \ge 2$ for all $e \in E(H)$, we have
    \begin{align*}
        |E(H)| \le \frac{pk-p}{2}+1.
    \end{align*}
    In particular,
    \begin{align*}
        |V(H)| \le |E(H)|+1 \le \frac{pk-p}{2}+2.
    \end{align*}    
    By the choice of $L_n$ in \eqref{eq: choice L}, we get $L_n \sigma_n^* \le 1$. The summation over the tuple $(H,q)$ has a finite number of terms depending only on $p$ and $k$. By replacing the value of $L_n$ given in \eqref{eq: choice L}, we deduce that
    \begin{align*}
        \esp \Delta_t^p &\le C_{p,k}\sigma_t^p L_n^{p-2}
    \end{align*}
    Markov's inequality implies that
    \begin{align}\label{ineq: tail bound delta_t final inequality}
        \prob\left(|\Delta_t| \ge s\sigma_{t}\right) \le C_{p,k}\frac{L_n^{p-2}}{s^p}.
    \end{align}
    Since $L_n \le \log^{c}n$, applying \eqref{ineq: tail bound delta_t final inequality} for $s=n^{\eps}$ and $p'=(p+3)/\eps$, we get
    \begin{align*}
        \prob\left(|\Delta_t| \ge n^\eps\sigma_{t}\right) \le C_{p',k}\frac{\log^{(p+3)c/\eps}n}{n^{p+3}} \le \frac{1}{n^{p+2}},
    \end{align*}
    for all $n\ge n(\eps,k,p)$, where $n(\eps,k,p)$ is a large constant depending on $\eps,k,p$. This implies \eqref{ineq: trace-tails-delta_t}, and Proposition \ref{proposition: trace-tails} follows.
\end{proof}

\subsection{The non-universal regime for the limiting ESD}

The goal of this subsection is to show that the sparsity assumption $\sigma_n^* \to 0$ is necessary in Theorem \ref{th: main ESD} for convergence to the semicircle law. This is formalized by the following proposition, and the proof is provided for completeness.
\begin{proposition}\label{prop: ESD}
Let $d\geq 2$ be constant with respect to $n$, and assume $n\to\infty$
with $nd$ even.
    \begin{itemize}
        \item (Distributional Non-universality) Let $W_n$ and $W_n'$ be independent symmetric matrices, with entries drawn as independent copies of bounded random variables $\xi$ and $\xi'$ respectively, both of which are symmetrically distributed and have unit variance. If the laws of $\xi$ and $\xi'$ are not equal, then there is a sequence of 
        $d$-regular graphs with adjacency matrices $A_n$, such that
        the matrices $X_n := \frac{1}{\sqrt{d}}A_n \circ W_n$ and $X_n' := \frac{1}{\sqrt{d}}A_n \circ W_n'$ have different limiting ESD almost surely. 
        
        \item (Structural Non-universality) Let $W_n$ be a symmetric matrix with independent Rademacher entries. There exist two sequences of $d$-regular graphs with adjacency matrices $A_n$ and $A_n'$ so that $X_n := \frac{1}{\sqrt{d}} A_n \circ W_n$ and $X_n' := \frac{1}{\sqrt{d}} A_n' \circ W_n$ have different limiting ESD almost surely.
    \end{itemize}
\end{proposition}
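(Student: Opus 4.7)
The plan for Proposition~\ref{prop: ESD} uses a common reduction for both parts: if $A_n$ is a disjoint union of many iid copies of a fixed graph on $B$ vertices (with $n=Bm$ and $m=m(n)\to\infty$), then $X_n=\frac{1}{\sqrt d}A_n\circ W_n$ is block-diagonal with $m$ iid symmetric blocks of size $B$. The law of large numbers for iid random probability measures gives that $\mu_{X_n}$ converges almost surely weakly to the expected spectral measure $\nu$ of a single block. Each part therefore reduces to showing that appropriate block choices produce distinct $\nu$, which I detect through a carefully chosen moment using the combinatorial trace formula.

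For the distributional part, I would take both $A_n$'s to be disjoint unions of copies of $K_{d+1}$, so that $X_n$ and $X_n'$ have iid blocks $M=\frac{1}{\sqrt d}\tilde W_{d+1}$ and $M'$ distinguished only by the entry distribution. Let $k^*\geq 4$ be the smallest integer with $\esp[\xi^{k^*}]\neq\esp[\xi'^{k^*}]$; this exists and is even since both distributions are symmetric with unit variance. Expanding
\[
\int x^{k^*}\,d\nu=\frac{1}{(d+1)\,d^{k^*/2}}\sum_{\substack{\text{closed walks of}\\\text{length }k^*\text{ on }K_{d+1}}}\prod_{e}\esp[\xi^{n_e}]
\]
as a sum over closed walks with edge multiplicities $n_e$, the only walks contributing a factor of $\esp[\xi^{k^*}]$ are the $(d+1)d$ back-and-forth traversals of a single edge; every other walk uses each edge at most $k^*/2\leq k^*-2$ times. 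By minimality of $k^*$ the lower-moment contributions agree between $\nu$ and $\nu'$, so $\int x^{k^*}d\nu-\int x^{k^*}d\nu'=\frac{1}{d^{k^*/2-1}}(\esp[\xi^{k^*}]-\esp[\xi'^{k^*}])\neq 0$.

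For the structural part, I would take $A_n$ to be a disjoint union of copies of $K_{d+1}$ and $A_n'$ a disjoint union of copies of $K_{d,d}$; both are $d$-regular. To distinguish the resulting limits $\nu,\nu'$, I compute the $6$-th moments. Each equals $\tilde N_6(G)/d^3$, where $\tilde N_6(G)$ counts closed walks of length $6$ from a fixed vertex of $G$ with every edge multiplicity even. Classifying such walks by the type of induced subgraph (single edge, cherry, three-edge path, claw, or triangle) and enumerating via the adjacency-matrix powers of $K_3$, $P_4$, and $K_{1,3}$, one finds that the single-edge, cherry, and claw contributions coincide for both graphs ($d$, $6d(d-1)$, and $2d(d-1)(d-2)$ per vertex, respectively). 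The path contribution is $3d(d-1)(d-2)$ for $K_{d+1}$ but $3d(d-1)^2$ for $K_{d,d}$, and the triangle contribution is $11d(d-1)/2$ for $K_{d+1}$ versus $0$ for $K_{d,d}$. Summing yields $\tilde N_6(K_{d+1})-\tilde N_6(K_{d,d})=\tfrac{5d(d-1)}{2}>0$, so the $6$-th moments of $\nu,\nu'$ differ.

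The main obstacle is the careful combinatorial bookkeeping in the structural case: one must verify that the per-vertex counts of walks on shapes common to both graphs really coincide (which is not automatic for arbitrary $d$-regular graphs) and that the path- and triangle-walk counts are as claimed. This reduces to a finite enumeration on $K_3$, $P_4$, and $K_{1,3}$ combined with counting embeddings of these small graphs that contain a fixed vertex in each of $K_{d+1}$ and $K_{d,d}$. Minor divisibility issues for $n$ can be handled, as in the proof of Theorem~\ref{th: presence outliers}, by passing to a suitable subsequence or appending a negligible adjustment graph.
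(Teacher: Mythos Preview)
Your overall strategy is sound and for the distributional part essentially coincides with the paper's: both take $A_n$ to be a disjoint union of copies of $K_{d+1}$, invoke the block--LLN reduction, and detect the difference via the smallest moment $k^*$ at which $\xi,\xi'$ disagree. One minor slip: your claim that ``every other walk uses each edge at most $k^*/2$ times'' is false (e.g.\ $k^*=6$ allows edge multiplicities $(4,2)$); the correct bound is $k^*-2$, which is what you actually need since $k^*-2<k^*$ and all lower moments agree by minimality.

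For the structural part your route differs from the paper's. The paper compares the clique construction to a \emph{random} $d$--regular graph, using Benjamini--Schramm convergence to the infinite $d$--regular tree to compute the limiting sixth moment. Your choice of $K_{d,d}$ is a cleaner, fully deterministic alternative: it is triangle-free and $d$--regular, and since for Rademacher weights only even edge-multiplicity walks contribute, the sixth moment depends only on the counts of single-edge, cherry, $P_4$ and claw subgraphs through a vertex---which for any triangle-free $d$--regular graph coincide with the tree counts. So your $\tilde N_6(K_{d,d})$ equals the paper's tree value, and the comparison is the same in spirit but avoids local limits.

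There is, however, a genuine arithmetic error in your bookkeeping. The number of closed walks of length $6$ from a fixed vertex of $K_3$ using all three edges (necessarily with multiplicity $(2,2,2)$) is $8$, not $11$: the total $(A_{K_3}^6)_{oo}=22$, minus $2$ single-edge walks, minus $12$ cherry walks ($6$ from the cherry centered at $o$ and $3$ from each of the two cherries with $o$ as an endpoint). Hence the triangle contribution to $\tilde N_6(K_{d+1})$ is $8\binom{d}{2}=4d(d-1)$, and
\[
\tilde N_6(K_{d+1})-\tilde N_6(K_{d,d})
=\bigl[3d(d-1)(d-2)-3d(d-1)^2\bigr]+4d(d-1)=d(d-1),
\]
not $\tfrac{5d(d-1)}{2}$. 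The conclusion is unaffected (the difference is still positive for every $d\ge2$), and indeed $d(d-1)$ matches the paper's computed discrepancy.
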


\begin{remark}
We note that the distributional non-universality in Proposition~\ref{prop: ESD} was originally proved in \cite{goldmakher2014weightedgraphs} for graphs $G_n$ that are locally tree-like in the Benjamini and Schramm topology \cite{benjamini2011recurrence}. The authors of \cite{goldmakher2014weightedgraphs}, however, focus on the "eigendistributions" of the limit ESD of $A_{G_n}\circ W_n$ as a function of $\xi$.
\end{remark}

\begin{proof}
Given a centered random variable $\xi$ with unit variance, we denote by $W_n^\xi$ the $n\times n$ Wigner matrix whose entries on and above the main diagonal are iid copies of $\xi$. 
We fix an integer $d\geq 2$ independent of $n$, and for every $d$-regular graph $G_n$ on $n$ vertices, we denote by $A_{G_n}$ its adjacency matrix. 
To prove distributional non-universality, we will construct a sequence of $d$-regular graphs $G_n$ and show that the limiting spectral distribution of $\frac{1}{\sqrt{d}} A_{G_n}\circ W_n^{\xi}$ depends on $\xi$. To show the dependence on the graph structure, we will set $\xi$ to be a Rademacher variable and construct two sequences of $d$-regular graphs $G_n$ and $G_n'$ such that the limiting spectral distributions of $\frac{1}{\sqrt{d}} A_{G_n}\circ W_n^{\xi}$ and $\frac{1}{\sqrt{d}} A_{G_n'}\circ W_n^{\xi}$ are different.

Regarding distributional non-universality, let $C_d$ be the clique in $d+1$ vertices, i.e. a complete graph on $d+1$ vertices excluding self-loops. 
To simplify the exposition, we will
suppose that $n$ is a multiple of $d+1$ and let $G_n$ be a $d$-regular graph on $n$ vertices given by the union of $n/(d+1)$ cliques $C_d$. 
Denote the entries of $A_{G_n}$ by $(a_{ij})_{1\leq i,j\leq n}$, the entries of $W_n^{\xi}$ by $(w_{ij}^{(\xi)})_{1\leq i,j\leq n}$, and set $X_n= \frac{1}{\sqrt{d}} A_{G_n}\circ W_n^{\xi}$.
We can write 
\begin{align*}
    \frac{1}{n}\esp \tr(X_n^{2k})=\frac{1}{d^k}\frac{1}{n}\sum_{u \in [n]^{2k}}a_{u_1u_2}\cdots a_{u_{2k}u_1}\esp w_{u_1u_2}^{(\xi)}\cdots w_{u_{2k}u_1}^{(\xi)},
\end{align*}
for every integer $k$. Fix a vertex $o \in C_d$. For a graph $H$
and a vertex $u_1$ in $H$, let $\mathcal{P}(u_1,H,2k)$ be the set of paths in $H$ starting and ending at $u_1$ and of length $2k$. Then, from the above
\begin{align*}
    \frac{1}{n}\esp \tr(X_n^{2k})=\frac{1}{nd^k}\sum_{u_1 \in [n]}\sum_{u \in \mathcal{P}(u_1,C_d(u_1),2k)}\esp w_{u_1u_2}^{(\xi)}\cdots w_{u_{2k}u_1}^{(\xi)},
\end{align*}
where by $C_d(u_1)$ we denoted the $(d+1)$--clique in $G_n$ containing $u_1$.
Since $G_n$ is a disjoint union of copies of $C_d$ and the variables in each clique are iid, we deduce that
\begin{equation}\label{eq: ESD clique}
    \frac{1}{n}\esp \tr(X_n^{2k})=\frac{1}{d^k}\sum_{u \in \mathcal{P}(o,C_d,2k)}\esp w_{u_1u_2}^{(\xi)}\cdots w_{u_{2k}u_1}^{(\xi)}=:m((C_d,o),2k,\xi).
\end{equation}
It is immediate to see that the sequence
$(m((C_d,o),2k,\xi))_{k\geq 1}$
uniquely determines all the even moments of $\xi$
(more specifically, if $k_{\min}$ is the smallest integer such that
two distinct normalized symmetric distributions $\xi$ and $\xi'$
have different $2k_{\min}$--th moments then necessarily
$m((C_d,o),2k_{\min},\xi)\neq m((C_d,o),2k_{\min},\xi')$).
This completes the proof of distributional non-universality.
Let us remark here that the extra assumption $n\mod (d+1)=0$
which we used in our construction, is not essential and
can be easily removed by letting $G_n$ to be a disjoint union
of $\lfloor n/(d+1)\rfloor-1$ cliques and a $d$--regular graph of size
$n-(d+1)\lfloor n/(d+1)\rfloor+(d+1)$
having arbitrary topology.

\bigskip

To prove the structural non-universality, we now set $\xi$ to be a $\pm 1$ Rademacher random variable. 
We assume $d \ge 3$, as the case $d=2$ can be verified by inspection.
Again, to simplify the exposition, we will
suppose that $n$ is a multiple of $d+1$.
Consider the sequence of graphs $G_n$ introduced above, and let $G_n'$ be a sequence of randomly uniformly chosen $d$-regular graphs on $n$ vertices. Then it was proved in \cite{mckay1981expected} that $G_n'$ converges (in the Benjamini and Schramm sense \cite{benjamini2011recurrence}) to the infinite rooted $d$-regular tree $(T_d,o)$. If $X'=\frac{1}{\sqrt{d}}A_{G'_n}\circ W^\xi_n$ where $G_n'$ and $W^\xi_n$ are independent, the above argument and the local convergence of $G'_n$ imply that
\begin{equation}\label{eq: ESD tree}
    \frac{1}{n}\esp \tr((X_n')^{2k})\to \frac{1}{d^k}\sum_{u \in \mathcal{P}(o,T_d,2k)}\esp w_{u_1u_2}^{\xi}\cdots w_{u_{2p}u_1}^{\xi}=m((T_d,o),2k,\xi).
\end{equation}
Let $k=3$, then, by counting the shapes, we can compute
\begin{align*}
    |\mathcal{P}(o,T_d,6)|=3d(d-1)^2+6d(d-1)+2d(d-1)(d-2)+d,
\end{align*}
so
\begin{align*}
    \frac{d^3}{n}\esp \tr((X_n')^{6}) \to 3d(d-1)^2+6d(d-1)+2d(d-1)(d-2)+d.
\end{align*}
On the other hand, for the clique $C_d$, we have
\begin{align*}
    \frac{d^3}{n}\esp \tr(X_n^{6})=2d(d-1)^2+8d(d-1)+3d(d-1)(d-2)+d.
\end{align*}
Indeed, the shape $s=o\to v_1 \to v_2 \to v_3 \to v_2 \to v_1 \to o$ that originally appeared for $T_d$ and its contribution $d(d-1)^2$ has to be decomposed on whether $v_3=o$ for the clique. In this case, the shape is two laps on a triangle, and inverting the orientation of the second lap implies the contribution of $2d(d-1)$. If $v_3 \ne o$, then its contribution is $d(d-1)(d-2)$ and the result follows. 
Subtracting one from the other, we get
\begin{align*}
    m((C_d,o),6,\xi)-m((T_d,o),6,\xi)=\frac{d(d-1)}{d^3}.
\end{align*}
In particular, structural non-universality follows as $d \ge 2$.
\end{proof}

\bigskip
\bigskip

\end{document}